\newtheorem{thm}{Theorem}[section]
\newtheorem{prop}[thm]{Proposition}
\newtheorem{lem}[thm]{Lemma}
\theoremstyle{definition}
\newtheorem{defn}[thm]{Definition}
\newcounter{labelflag} \setcounter{labelflag}{0}
\newcommand{\Label}[1]{
                       \ifnum\thelabelflag=1
                          \ifmmode
                             \makebox[0in][l]{\qquad\fbox{\rm#1}}
                          \else
                             \marginpar{\vspace{0.7\baselineskip}
                                        \hspace{-1.1\textwidth}
                                        \fbox{\rm#1}}
                          \fi
                       \fi
                       \label{#1}
                      }
\newcommand{\be}{\begin{equation}}
\newcommand{\ee}{\end{equation}}
\newcommand{\vt}{{\tilde{v}}}
\newcommand{\ut}{{\tilde{u}}}
 \newcommand{\R}{\mathbb{R}}
\newcommand{\hone}{ H^1(\mathbb{R}^3) }
\newcommand{\ltwo}{ L^2(\mathbb{R}^3) }
\begin{document}

\begin{titlepage}
\title{\Large\bf   Asymptotic Behavior of Stochastic Wave
Equations with Critical Exponents on $\R^3$}
\vspace{7mm}

\author{
Bixiang Wang  \thanks {Supported in part by NSF  grant DMS-0703521}
\vspace{1mm}\\
Department of Mathematics, New Mexico Institute of Mining and
Technology \vspace{1mm}\\ Socorro,  NM~87801, USA \vspace{1mm}\\
Email: bwang@nmt.edu}
\date{}
\end{titlepage}

\maketitle

\begin{abstract}
The
existence of a random attractor
in $H^1(\mathbb{R}^3) \times L^2(\mathbb{R}^3)$ is proved
for the  damped semilinear stochastic wave equation
defined on the entire space $\mathbb{R}^3$. The nonlinearity is
allowed to have a cubic growth rate which is referred to as the
critical exponent. The uniform pullback estimates
on the tails of solutions  for large space variables are  established.
The pullback asymptotic compactness of the random dynamical system
is proved  by using these  tail estimates and the energy equation method.
\end{abstract}

{\bf Key words.} Random attractor, asymptotic compactness,
 wave equation.

{\bf MSC 2000.}  37L55; Secondary: 60H15, 35B40.

\baselineskip=1.5\baselineskip

 \section{Introduction}
\setcounter{equation}{0}

This paper deals with the existence of a random attractor
 for the  stochastic wave  equation defined  on $\mathbb{R}^3$:
\begin{equation}
\label{intro1}
u_{tt} + \alpha u_t -\Delta u + \lambda u
+ f(x, u) = g(x) + h(x) {\frac {dw}{dt}},
\end{equation}
  with the initial
 conditions
 \begin{equation}
 \label{intro2}
 u(x, \tau) =u_0(x), \quad u_t(x, \tau) = u_1(x),
 \end{equation}
 where
 $x \in \mathbb{R}^3$, $   t> \tau
 $ with $   \tau \in \mathbb{R}$,
 $  \alpha$ and $\lambda$
  are     positive numbers,
 $g  $ and $h$  are given in $L^2(\mathbb{R}^3) $ and
 $ H^1(\mathbb{R}^3)$ respectively,
$f$ is a nonlinear  function with cubic growth
rate  (called  the critical exponent), and
 $w $   is  an independent   two-sided real-valued
 Wiener process  on a probability
 space.

The global attractors of deterministic wave equations
have been studied extensively in the literature, see,
e.g., \cite{bab1, bal1, hal1, sel1, tem1} and the references therein.
Particularly,  the existence of  attractors
was   proved
in \cite{arr1, bab1,  bal1, chu1,  fei4, kha1,   sun1, sun2}
for the deterministic wave  equations defined on
{\it bounded domains}
with    critical
  exponents,
and in \cite{fei1, fei2, fei3, pri1}
for the   equations
defined  on {\it unbounded domains}
with critical or supercritical exponents.
In this paper, we will prove the existence of a {\it random attractor}
for the stochastic wave equation \eqref{intro1} with critical exponents
defined on the entire space $\R^3$.

The interesting features of problem \eqref{intro1}-\eqref{intro2}
lie in:  (i) The equation is stochastic.  In this case, problem
\eqref{intro1}-\eqref{intro2} determines a random dynamical system
instead of a deterministic  semigroup; (ii) The nonlinearity $f$ is
   critical.  The difficulty caused
 by the non-compactness of  embedding $H^1  \hookrightarrow L^6  $
must be overcome
in order to deal with the asymptotic compactness of solutions
with such a critical nonlinearity;
(iii) The domain  $\R^3$ of problem \eqref{intro1}-\eqref{intro2}
is unbounded. In this case,  the  embeddings $H^1(\R^3) \hookrightarrow
L^p(\R^3)$ are not compact  even for $p<6$.
This is essentially different from the case of bounded domains.

To study the long term behavior of solutions of stochastic
differential equations, the concept of
  random attractor should be used instead of global
attractor, which was introduced   in \cite{cra2, fla1} for random dynamical systems.
Since the nonlinearity  $f$ of equation \eqref{intro1}
has  a  critical growth rate,  the mapping $f$
from  $H^1(Q)$  to  $L^2(Q)$  is continuous, but not compact, even for
a bounded domain $Q$ in $\R^3$.
To circumvent the difficulty and prove the asymptotic compactness
of the deterministic wave equation on a bounded domain $Q$,
an energy equation approach was developed by Ball in \cite{bal1}.
This method is quite effective for a variety of applications,
see, e.g., \cite{bal2, ju1, moi1,moi2,   wanx}.
Notice that the compactness of embeddings
$H^1(Q) \hookrightarrow
L^p(Q)$ with $p<6$
 was crucial and  frequently used in \cite{bal1}
when $Q$ is {\it bounded}.
In our case, the domain $\R^3$ is unbounded, and hence
the  embeddings
  $H^1(\R^3) \hookrightarrow
L^p(\R^3)$ are not compact for any $p$.
This means  that   Ball's  method \cite{bal1}
alone is not sufficient for proving
the asymptotic compactness of the equation on $\R^3$.
We must overcome the
difficulty caused by the
  non-compactness of embeddings
$H^1(\R^3) \hookrightarrow
L^p(\R^3)$ for $p<6$.
In this paper, we will solve the problem
 by using
  the method of tail estimates
  developed in \cite{wan1}
for deterministic parabolic  equations.
In other words, we will first show that the solutions
of problem \eqref{intro1}-\eqref{intro2} uniformly
approach zero,  in a sense,  as $x$ and $t$ go to infinity, and then
apply these tail estimates and  the energy equation method \cite{bal1}
 to prove the asymptotic
compactness of the stochastic wave equations on $\R^3$.

The random attractors of stochastic equations have been
investigated by several authors
  in \cite{arn1, car1, car2, cra1, cra2, fla1}
and the references therein.
In these papers, the domains of PDEs  were supposed to be
{\it  bounded}.
In the case of {\it unbounded} domains, the existence of random attractors
has been established  recently for   parabolic and wave equations
in
   \cite{bat2}
 and \cite{wan2}, respectively.
Notice that the method of \cite{wan2} only works for
the wave equation with
subcritical nonlinearity, and is not valid for the
critical case. It is the intension of this paper
to prove the existence of  a random attractor
for the stochastic wave equation with critical nonlinearity on $\R^3$.

 This paper is organized  as follows. In the next section, we
 recall the
 random attractors  theory  for random dynamical systems. In Section 3,
 we define  a continuous  random dynamical system for problem \eqref{intro1}-\eqref{intro2}.
 The uniform estimates of solutions  are contained
 in Section 4, which include
   uniform estimates on the tails of solutions.
In Section 5, we  prove  the pullback  asymptotic compactness
and  the existence  of random attractors for the stochastic
wave equation on $\R^3$.

In the sequel,  we adopt  the following notations.  We denote by
$\| \cdot \|$ and $(\cdot, \cdot)$ the norm and the inner product
of
  $L^2(\mathbb{R}^3)$, respectively.    The
norm of  a given  Banach space $X$  is written as    $\|\cdot\|_{X}$.
We also use $\| \cdot\|_{p}$    to denote   the norm  of
$L^{p}(\mathbb{R}^3)$.  The letters $c$ and $c_i$ ($i=1, 2, \ldots$)
are  generic positive constants  which may change their  values from line to
line or even in the same line.

\medskip

\section{Preliminaries}
\setcounter{equation}{0}

In this section, we recall some basic concepts
related to random attractors for stochastic dynamical
systems. The reader is referred to \cite{arn1, bat1, cra1, fla1} for more details.

Let  $(X, \| \cdot \|_X)$ be a   separable
Hilbert space with Borel $\sigma$-algebra $\mathcal{B}(X)$,
 and
$(\Omega, \mathcal{F}, P)$  be  a probability space.

\begin{defn}
$(\Omega, \mathcal{F}, P,  (\theta_t)_{t\in \R})$
is called a  metric   dynamical  system
if $\theta: \R \times \ \Omega \to \Omega$ is
$(\mathcal{B}(\R) \times \mathcal{F}, \mathcal{F})$-measurable,
$\theta_0$ is the identity on $\Omega$,
$\theta_{s+t} = \theta_t \circ  \theta_s$ for all
$s, t \in \R$ and $\theta_t P = P$ for all $t \in \R$.
 \end{defn}

\begin{defn}
\label{RDS}
A continuous random dynamical system (RDS)
on $ X  $ over  a metric  dynamical system
$(\Omega, \mathcal{F}, P,  (\theta_t)_{t\in \R})$
is  a mapping
  $$
\Phi: \R^+ \times \Omega \times X \to X \quad (t, \omega, x)
\mapsto \Phi(t, \omega, x),
$$
which is $(\mathcal{B}(\R^+) \times \mathcal{F} \times \mathcal{B}(X), \mathcal{B}(X))$-measurable and
satisfies, for $P$-a.e.  $\omega \in \Omega$,

(i) \  $\Phi(0, \omega, \cdot) $ is the identity on $X$;

(ii) \  $\Phi(t+s, \omega, \cdot) = \Phi(t, \theta_s \omega,
\cdot) \circ \Phi(s, \omega, \cdot)$ for all $t, s \in \R^+$;

(iii) \  $\Phi(t, \omega, \cdot): X \to  X$ is continuous for all
$t \in  \R^+$.
\end{defn}

Hereafter, we always assume that $\Phi$  is a continuous RDS on $X$
over $(\Omega, \mathcal{F}, P,  (\theta_t)_{t\in \R})$.

\begin{defn}
 A random  bounded set $\{B(\omega)\}_{\omega \in \Omega}$
 of  $  X$  is called  tempered
 with respect to $(\theta_t)_{t\in \R}$ if for $P$-a.e. $\omega \in \Omega$,
 $$ \lim_{t \to \infty} e^{- \beta t} d(B(\theta_{-t} \omega)) =0
 \quad \mbox{for all} \  \beta>0,
 $$
 where $d(B) =\sup_{x \in B} \| x \|_{X}$.
\end{defn}

\begin{defn}
 A random   function  $r(\omega) $
   is called  tempered
 with respect to $(\theta_t)_{t\in \R}$ if for $P$-a.e. $\omega \in \Omega$,
 $$ \lim_{t \to \infty} e^{- \beta t} r (\theta_{-t} \omega ) =0
 \quad \mbox{for all} \  \beta>0 .
 $$
\end{defn}

\begin{defn}
Let $\mathcal{D}$ be a collection of  random  subsets of $X$.
Then  $\mathcal{D}$ is called inclusion-closed if
   $D=\{D(\omega)\}_{\omega \in \Omega} \in {\mathcal{D}}$
and  $\tilde{D}=\{\tilde{D}(\omega) \subseteq X:  \omega \in \Omega\} $
with
  $\tilde{D}(\omega) \subseteq D(\omega)$ for all $\omega \in \Omega$ imply
  that  $\tilde{D} \in {\mathcal{D}}$.
  \end{defn}

\begin{defn}
Let $\mathcal{D}$ be a collection of random subsets of $X$ and
$\{K(\omega)\}_{\omega \in \Omega} \in \mathcal{D}$. Then
$\{K(\omega)\}_{\omega \in \Omega} $ is called an absorbing set of
$\Phi$ in $\mathcal{D}$ if for every $B \in \mathcal{D}$ and
$P$-a.e. $\omega \in \Omega$, there exists $t_B(\omega)>0$ such
that
$$
\Phi(t, \theta_{-t} \omega, B(\theta_{-t} \omega)) \subseteq
K(\omega) \quad \mbox{for all} \ t \ge t_B(\omega).
$$
\end{defn}

\begin{defn}
Let $\mathcal{D}$ be a collection of random subsets of $X$. Then
$\Phi$ is said to be  $\mathcal{D}$-pullback asymptotically
compact in $X$ if  for $P$-a.e. $\omega \in \Omega$,
$\{\Phi(t_n, \theta_{-t_n} \omega,
x_n)\}_{n=1}^\infty$ has a convergent  subsequence  in $X$
whenever
  $t_n \to \infty$, and $ x_n\in   B(\theta_{-t_n}\omega)$   with
$\{B(\omega)\}_{\omega \in \Omega} \in \mathcal{D}$.
\end{defn}

\begin{defn}
Let $\mathcal{D}$ be a collection of random subsets of $X$
and $\{\mathcal{A}(\omega)\}_{\omega \in \Omega} \in  \mathcal{D}$.
Then   $\{\mathcal{A}(\omega)\}_{\omega \in \Omega} $
is called a   $\mathcal{D}$-random   attractor
(or $\mathcal{D}$-pullback attractor)  for
  $\Phi$
if the following  conditions are satisfied, for $P$-a.e. $\omega \in \Omega$,

(i) \  $\mathcal{A}(\omega)$ is compact,  and
$\omega \mapsto d(x, \mathcal{A}(\omega))$ is measurable for every
$x \in X$;

(ii) \ $\{\mathcal{A}(\omega)\}_{\omega \in \Omega}$ is invariant, that is,
$$ \Phi(t, \omega, \mathcal{A}(\omega)  )
= \mathcal{A}(\theta_t \omega), \ \  \forall \   t \ge 0;
$$

(iii) \ \ $\{\mathcal{A}(\omega)\}_{\omega \in \Omega}$
attracts  every  set  in $\mathcal{D}$,  that is, for every
 $B = \{B(\omega)\}_{\omega \in \Omega} \in \mathcal{D}$,
$$ \lim_{t \to  \infty} d (\Phi(t, \theta_{-t}\omega, B(\theta_{-t}\omega)), \mathcal{A}(\omega))=0,
$$
where $d$ is the Hausdorff semi-metric given by
$d(Y,Z) =
  \sup_{y \in Y }
\inf_{z\in  Z}  \| y-z\|_{X}
 $ for any $Y\subseteq X$ and $Z \subseteq X$.
\end{defn}

The following existence result on a    random attractor
for a  continuous  RDS
can be found in \cite{bat1,  fla1}.
\begin{prop}
\label{att} Let $\mathcal{D}$ be an inclusion-closed
 collection of random subsets of
$X$ and $\Phi$ a continuous RDS on $X$ over $(\Omega, \mathcal{F},
P,  (\theta_t)_{t\in \R})$. Suppose  that $\{K(\omega)\}_{\omega
\in K} $ is a closed  absorbing set of  $\Phi$  in $\mathcal{D}$
and $\Phi$ is $\mathcal{D}$-pullback asymptotically compact in
$X$. Then $\Phi$ has a unique $\mathcal{D}$-random attractor
$\{\mathcal{A}(\omega)\}_{\omega \in \Omega}$ which is given by
$$\mathcal{A}(\omega) =  \bigcap_{\tau \ge 0} \  \overline{ \bigcup_{t \ge \tau} \Phi(t, \theta_{-t} \omega, K(\theta_{-t} \omega)) }.
$$
\end{prop}

In this paper, we will
denote by $\mathcal{D}$
the collection of all tempered random sets
of $\hone \times \ltwo$, and     prove
problem \eqref{intro1}-\eqref{intro2}  has a $\mathcal{D}$-random attractor.

 \section{Random Dynamical Systems}
\setcounter{equation}{0}

In this section, we define a continuous
random dynamical system for
problem \eqref{intro1}-\eqref{intro2}.
Denote by
 $z= u_t + \delta u$ where $\delta$ is a small
 positive number to be determined later.
Substituting $u_t =z -\delta u$ into
\eqref{intro1} we find that
 \begin{equation}
 \label{spde1}
 {\frac {du}{dt}} + \delta u =z,
 \end{equation}
 \begin{equation}
 \label{spde2}
 {\frac {dz}{dt}} + (\alpha -\delta) z
 + (\lambda + \delta^2 -\alpha \delta) u
 -\Delta u + f(x, u) =g(x) + h(x) {\frac {dw}{dt}},
 \end{equation}
   with the initial
 conditions
 \begin{equation}
 \label{spde3}
 u(x, \tau) =u_0(x), \quad z(x, \tau) = z_0(x),
 \end{equation}
  where $z_0(x) = u_1(x)+ \delta u_0(x)$,
 $x \in \mathbb{R}^3$, $   t> \tau
 $ with $   \tau \in \mathbb{R}$,
 $  \alpha$ and $\lambda$
  are     positive numbers,
 $g \in L^2(\mathbb{R}^3) $ and $h \in H^1(\mathbb{R}^3)$  are  given, and
 $w $   is  an independent   two-sided real-valued
 Wiener process  on a complete probability
 space $(\Omega, \mathcal{F}, P)$
 with path $\omega(\cdot)$
 in $C(\mathbb{R}, \mathbb{R})$
 satisfying $\omega(0) =0$.
In addition, $(\Omega, \mathcal{F}, P, (\theta_t)_{t\in\mathbb{R}})$
 forms   a metric dynamical system, where
 $(\theta_t)_{t\in\mathbb{R}}$ is a
   family of measure preserving
 shift operators
   given  by
 $$
 \theta_t \omega (\cdot) = \omega (\cdot +t)
 - \omega (t), \quad  \forall \   \omega \in \Omega
\ \  \mbox{and} \  \  t\in \mathbb{R}.
 $$
 Let   $F(x,u) = \int_0^u f(x,s) ds$ for $x\in \mathbb{R}^3$ and $u\in \mathbb{R}$.
We assume the following conditions on the  the nonlinearity $f$,
  for every
 $x\in \mathbb{R}^3$ and $u\in \mathbb{R}$,
 \begin{equation}
 \label{f1}
 |f(x,u)|
 \le c_1 |u|^\gamma + \phi_1(x),\quad \phi_1 \in L^2(\mathbb{R}^3),
 \end{equation}
 \begin{equation}
 \label{f2}
 f(x,u) u - c_2 F(x, u) \ge \phi_2(x),
 \quad \phi_2 \in L^1(\mathbb{R}^3),
 \end{equation}
 \begin{equation}
 \label{F2}
 F(x,u) \ge c_3 |u|^{\gamma +1} -\phi_3,
 \quad \phi_3 \in L^1(\mathbb{R}^3),
 \end{equation}
 \begin{equation}
 \label{f3}
 |f_u (x,u)| \le c_4 |u|^{\gamma -1} + \phi_4,
 \quad \phi_4 \in H^1(\mathbb{R}^3),
 \end{equation}
 where  $1\le \gamma \le3$.
As a special case,   $\gamma =3$ is referred to as
the critical exponent.
 Notice that \eqref{f1}
 and \eqref{f2} imply
 \begin{equation}
 \label{F3}
 F(x, u) \le c (|u|^2 + |u|^{\gamma +1} + \phi_1^2 + \phi_2),
 \end{equation}
which is useful when deriving uniform estimates
of solutions.

To study the dynamical behavior of problem
 \eqref{spde1}-\eqref{spde3}, we need to convert
the stochastic system
 into a deterministic one with a random
 parameter.
To this end, we set $v(t, \tau, \omega) = z(t, \tau, \omega)
 - h \omega(t).$ Then it follows from
   \eqref{spde1}-\eqref{spde3} that
 \begin{equation}
 \label{pde1}
 {\frac {du}{dt}} + \delta u -v  = h\omega (t),
 \end{equation}
 \begin{equation}
 \label{pde2}
 {\frac {dv}{dt}} + (\alpha -\delta) v
 + (\lambda + \delta^2 -\alpha \delta) u
 -\Delta u + f(x, u) =g + (\delta -\alpha) h \omega (t),
 \end{equation}
   with the initial
 conditions
 \begin{equation}
 \label{pde3}
 u(x, \tau) =u_0(x), \quad v(x, \tau) = v_0(x),
 \end{equation}
 where $v_0(x) = z_0(x) -h\omega(\tau)$.

 By a standard method as in \cite{fei1}, it can be proved
 that  problem \eqref{pde1}-\eqref{pde3}
 with \eqref{f1}-\eqref{f3}
 is well-posed in $H^1(\mathbb{R}^3) \times L^2(\mathbb{R}^3)$,
 that is, for $P$-a.e. $\omega \in \Omega$, for every $\tau \in \mathbb{R}$ and
 $(u_0, v_0) \in H^1(\mathbb{R}^3) \times L^2(\mathbb{R}^3)$,
 problem \eqref{pde1}-\eqref{pde3}
 has a unique solution
 $(u(\cdot, \tau, \omega), v(\cdot, \tau, \omega))
 \in C([\tau, \infty), H^1(\mathbb{R}^3) \times L^2(\mathbb{R}^3))$
 with $(u(\tau, \tau, \omega), v(\tau, \tau, \omega))
 =(u_0, v_0)$. Further, the solution is continuous with respect
 to   $(u_0, v_0)$ in $H^1(\mathbb{R}^3) \times L^2(\mathbb{R}^3)$.
Sometimes, we also write the solution as
$(u(t, \tau, \omega, u_0), v(t, \tau, \omega, v_0))$ to indicate
 the dependence of $(u,v)$ on initial data $(u_0, v_0)$.
The following weak continuity of solutions on initial data is useful
when proving the asymptotic compactness of solutions
in the last section.

\begin{lem}
\label{weak_cont}
Assume that $g \in L^2(\mathbb{R}^3)$, $ h  \in  H^1(\mathbb{R}^3)$ and
 \eqref{f1}-\eqref{f3} hold.
Then the solution $(u,v)$ of problem
\eqref{pde1}-\eqref{pde3} is weakly continuous with respect to
initial data $(u_0, v_0)$ in $\hone \times \ltwo$. That is, for
  $P$-a.e. $\omega \in \Omega$, $\tau \in \R$ and $t \ge \tau$,
$(u(t, \tau, \omega, u_{0,n}), v(t, \tau, \omega, v_{0,n}))$
weakly converges to
$(u(t, \tau, \omega, u_{0 }), v(t, \tau, \omega, v_{0 }))$
in $\hone \times \ltwo$ provided $(u_{0,n}, v_{0,n})$
weakly converges to $(u_0, v_0)$ in
$\hone \times \ltwo$.
\end{lem}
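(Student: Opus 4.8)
The plan is to establish weak continuity by a standard weak-compactness-plus-uniqueness argument, exploiting the fact that weakly convergent sequences in $\hone \times \ltwo$ are bounded. Fix $P$-a.e. $\omega$, $\tau\in\R$, and $t\ge\tau$. Suppose $(u_{0,n},v_{0,n})\rightharpoonup(u_0,v_0)$ in $\hone\times\ltwo$. By the uniform boundedness principle, $\{(u_{0,n},v_{0,n})\}$ is bounded in $\hone\times\ltwo$, so the initial data all lie in some fixed bounded ball. The first step is to invoke the uniform estimates of Section 4 to conclude that the corresponding solutions $(u_n,v_n)(\cdot)=(u(\cdot,\tau,\omega,u_{0,n}),v(\cdot,\tau,\omega,v_{0,n}))$ are bounded in $C([\tau,t],\hone\times\ltwo)$, uniformly in $n$, on the finite time interval $[\tau,t]$.

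**Next I would** extract a weakly convergent subsequence and identify its limit as a solution. From the uniform bound, a subsequence (not relabeled) satisfies $u_n\rightharpoonup\bar u$ weak-star in $L^\infty(\tau,t;\hone)$ and $v_n\rightharpoonup\bar v$ weak-star in $L^\infty(\tau,t;\ltwo)$, and using equations \eqref{pde1}--\eqref{pde2} to control $\partial_t u_n$ and $\partial_t v_n$ in suitable negative-order spaces, the Aubin--Lions lemma gives strong convergence of $u_n$ in $L^2(\tau,t;L^2_{loc})$ and hence $u_n\to\bar u$ a.e. The delicate point is passing to the limit in the nonlinear term $f(x,u_n)$: since $\gamma\le 3$, the growth bound \eqref{f1} together with the $H^1\hookrightarrow L^6$ embedding gives $f(x,u_n)$ bounded in $L^2_{loc}$, and the a.e. convergence plus this bound yields $f(x,u_n)\rightharpoonup f(x,\bar u)$ weakly. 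One then checks $(\bar u,\bar v)$ solves \eqref{pde1}--\eqref{pde3} with the same initial data $(u_0,v_0)$; verifying the initial condition requires the continuity-in-time of the limit, obtained from the uniform bound on the time derivatives. By the uniqueness asserted earlier for \eqref{pde1}--\eqref{pde3}, $(\bar u,\bar v)=(u(\cdot,\tau,\omega,u_0),v(\cdot,\tau,\omega,v_0))$.

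**To upgrade** this from convergence in the time-integrated sense to the pointwise-in-$t$ weak convergence claimed, I would argue that for the fixed terminal time $t$, the family $\{(u_n(t),v_n(t))\}$ is bounded in $\hone\times\ltwo$, so every subsequence has a further subsequence converging weakly to some limit in $\hone\times\ltwo$; by the identification above combined with the continuity of the limit solution in time, that limit must be $(u(t,\tau,\omega,u_0),v(t,\tau,\omega,v_0))$. Since the weak limit is independent of the subsequence, the whole sequence converges weakly, which is exactly the assertion.

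**The main obstacle** will be the limit passage in the critical nonlinear term, which is precisely where the noncompactness of $\hone\hookrightarrow L^6(\R^3)$ bites. Because we only need \emph{weak} continuity here, a.e. convergence on bounded subsets combined with the $L^2_{loc}$ uniform bound on $f(x,u_n)$ suffices to identify the weak limit of $f(x,u_n)$ as $f(x,\bar u)$, so full asymptotic compactness is not needed for this lemma; the strong-compactness difficulties are deferred to Section 5. A secondary technical care point is extracting enough local strong convergence via Aubin--Lions despite the unbounded domain, which is handled by working on bounded balls $\{|x|\le k\}$ and a diagonal argument over $k$.
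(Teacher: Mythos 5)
Your proposal is correct, and it is essentially the argument the paper itself has in mind: the paper omits the proof as ``quite standard,'' citing Ball's energy-equation paper, where weak continuity is obtained exactly as you do --- uniform energy bounds on finite intervals, weak-star compactness plus Aubin--Lions/a.e.\ convergence to pass to the limit in $f(x,u_n)$, identification of the limit as the solution with data $(u_0,v_0)$ via uniqueness, and a subsequence-uniqueness argument to upgrade to pointwise-in-$t$ weak convergence of the whole sequence. The only cosmetic point is that the bounds you need are forward-in-time Gronwall estimates on $[\tau,t]$ (obtained by integrating the differential inequality derived in Section 4), not the pullback absorbing estimates themselves, but that inequality does yield them.
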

\begin{proof}
 The proof is quite standard (see, e.g., \cite{bal1}) and hence
omitted here.
\end{proof}

We now define a random dynamical system for the
stochastic wave equation. Let $\Phi$ be a mapping,
 $\Phi$: $\mathbb{R}^+ \times \Omega \times
 H^1(\mathbb{R}^3) \times L^2(\mathbb{R}^3)$
 $\to H^1(\mathbb{R}^3) \times L^2(\mathbb{R}^3)$ given   by
 \begin{equation}
 \label{rds}
 \Phi(t, \omega, (u_0, z_0))
 = ( u(t,0,\omega, u_0), z(t,0,\omega, z_0)
  ) = (u(t,0,\omega, u_0), v(t,0, \omega, v_0) + h\omega(t)),
  \end{equation}
  for every
  $(t, \omega, (u_0, z_0)) \in \mathbb{R}^+ \times \Omega
  \times H^1(\mathbb{R}^3) \times L^2(\mathbb{R}^3)$,
where $v_0   = z_0 - h \omega (\tau)$.
  Then $\Phi$ is a continuous random dynamical system
  over
$(\Omega, \mathcal{F}, P, (\theta_t)_{t\in\mathbb{R}})$
on $\hone \times \ltwo$.
It is easy to verify that  $\Phi$  satisfies the
following  identity,
 for $P$-a.e.  $\omega \in \Omega$ and $t\ge 0$,
 \begin{equation}
 \label{shift}
 \Phi(t, \theta_{-t} \omega, (u_0, z_0))
 =
 (u(t , 0  , \theta_{-t} \omega, u_0), z(t , 0 ,  \theta_{-t}\omega, z_0))
 =
 (u(0, -t ,   \omega,u_0), z(0, -t ,  \omega, z_0)).
 \end{equation}

Throughout this paper, we always denote
 by $\mathcal{D}$ the collection of all
tempered random subsets of
$H^1(\mathbb{R}^3) \times L^2(\mathbb{R}^3) $, and will
prove  $\Phi$ has a $\mathcal{D}$-random attractor.

\section{Uniform  Estimates}
\setcounter{equation}{0}

In this section, we
 derive uniform estimates on  solutions of
problem \eqref{pde1}-\eqref{pde3}. These
   estimates  are  needed
 for proving  the existence of random absorbing sets
 and the pullback  asymptotic compactness of
  the random dynamical system $\Phi$.

Let $\delta>0 $ be small enough such that
\be
\label{delta}
\alpha -\delta>0 , \quad   \lambda +\delta^2 -\alpha \delta >0,
\ee
and denote by
\begin{equation}
\label{kappa}
\sigma ={\frac 12} \min \{\alpha-\delta, \delta, \delta c_2\},
  \end{equation}
  where $c_2$ is the positive constant in \eqref{f2}.

\begin{lem}
\label{lem31}
 Assume that $g \in L^2(\mathbb{R}^3)$, $ h  \in  H^1(\mathbb{R}^3)$ and
 \eqref{f1}-\eqref{f3} hold. Let
 $B=\{B(\omega)\}_{\omega \in \Omega}\in \mathcal{D}$.
  Then for $P$-a.e. $\omega \in \Omega$,
 there is $T= T (B, \omega) < 0$ such that for all $\tau \le T $,
 the solution $(u(\cdot,\tau, \omega, u_0), v(\cdot,\tau, \omega, v_0))$
 of problem \eqref{pde1}-\eqref{pde3}
 with $(u_0, v_0) \in B(\theta_\tau \omega)$ satisfies,
 for every $t \in [\tau, 0]$,
\begin{equation}
\label{lem31_1}
 \|  u(t, \tau, \omega, u_0)\|^2_{H^1(\mathbb{R}^3)}
   + \| v (t,\tau,   \omega, v_0 ) \|^2
 \le e^{-\sigma t} R (\omega),
 \end{equation}
 and
 \begin{equation}
\label{lem31_2}
 \int_\tau^t e^{\sigma \xi}
 \left ( \|  u(\xi, \tau, \omega, u_0)\|^2_{H^1(\mathbb{R}^3)}
  + \| v (\xi,\tau,   \omega, v_0  )\|^2  d\xi
  \right )
 \le R (\omega),
 \end{equation}
 where $R (\omega)$ is a positive  tempered random function.
 \end{lem}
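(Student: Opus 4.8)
The plan is to carry out Ball's energy method in the random setting, deriving a single differential inequality of the form $\frac{d}{dt}E + \sigma E \le \mathcal{R}(t,\omega)$ for an appropriate Lyapunov functional $E$ and a purely random forcing $\mathcal{R}$. Concretely, I would work with
$$
E(t) = \|v\|^2 + \|\nabla u\|^2 + (\lambda + \delta^2 - \alpha\delta)\|u\|^2 + 2\int_{\R^3} F(x,u)\,dx,
$$
and take the $\ltwo$ inner product of \eqref{pde2} with $v$. Using \eqref{pde1} in the form $u_t = v - \delta u + h\omega(t)$ to rewrite $(u,v)$, $(\nabla u,\nabla v)$ and $(f(x,u),v)$ as time derivatives plus lower–order terms — in particular $(f(x,u),u_t) = \frac{d}{dt}\int_{\R^3} F(x,u)\,dx$ — I expect to reach
$$
\tfrac12\tfrac{d}{dt}E + (\alpha-\delta)\|v\|^2 + \delta\|\nabla u\|^2 + \delta(\lambda+\delta^2-\alpha\delta)\|u\|^2 + \delta\,(f(x,u),u) = \mathcal{R}(t,\omega),
$$
where $\mathcal{R}$ collects $(g,v)$ together with the four terms carrying the factor $\omega(t)$. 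Condition \eqref{f2} then lets me replace $\delta(f(x,u),u)$ by $\delta c_2\!\int F + \delta\!\int\phi_2$, and with $\sigma$ chosen as in \eqref{kappa} the dissipative terms dominate $\sigma E$ from below (the factors $\alpha-\delta$, $\delta$, $\delta c_2$ being precisely the coefficients that must each beat the corresponding piece of $E$); this is exactly why $\sigma$ is taken as half the minimum of those three quantities.

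The key step, and the place I expect the critical exponent to bite, is the estimate of the nonlinear forcing term $\omega(t)\,(f(x,u),h)$ inside $\mathcal{R}$. A naive bound via $|f|\le c_1|u|^\gamma+\phi_1$ and $H^1\hookrightarrow L^6$ would produce $|\omega(t)|\,\|u\|_{H^1}^{3}$, which is superquadratic in $E$ and would destroy any linear Gronwall argument. The remedy is to pair the power $|u|^\gamma$ against $h$ at the \emph{same} integrability that the potential $F$ controls: by H\"older, $\int |u|^\gamma|h|\,dx \le \|u\|_{\gamma+1}^{\gamma}\,\|h\|_{\gamma+1}$, where $\|h\|_{\gamma+1}\le c\|h\|_{H^1}$ since $\gamma+1\in[2,6]$ and $h\in H^1(\R^3)$. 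Young's inequality with exponents $\frac{\gamma+1}{\gamma}$ and $\gamma+1$ then gives $|\omega(t)|\,\|u\|_{\gamma+1}^{\gamma}\|h\|_{\gamma+1} \le \varepsilon\|u\|_{\gamma+1}^{\gamma+1} + c_\varepsilon|\omega(t)|^{\gamma+1}\|h\|_{\gamma+1}^{\gamma+1}$, and the first term is absorbed into the dissipation through the lower bound $\int F \ge c_3\|u\|_{\gamma+1}^{\gamma+1}-\|\phi_3\|_1$ from \eqref{F2}. The remaining forcing terms $(g,v)$, $\omega(t)(h,v)$, $\omega(t)(\nabla u,\nabla h)$ and $\omega(t)(u,h)$ are handled by ordinary Young inequalities, each giving a small multiple of $\|v\|^2$, $\|\nabla u\|^2$ or $\|u\|^2$ (reabsorbed into the dissipation) plus a term $c|\omega(t)|^2\,\|h\|_{H^1}^2$; here the use of $\nabla h\in\ltwo$ again forces the hypothesis $h\in H^1$. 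The net outcome is $\frac{d}{dt}E + \sigma E \le \mathcal{R}(t,\omega)$ with $\mathcal{R}(t,\omega)\le c\big(1+|\omega(t)|^2+|\omega(t)|^{\gamma+1}\big)$ (constants depending on $\|g\|,\|h\|_{H^1},\|\phi_1\|,\|\phi_2\|_1,\|\phi_3\|_1$).

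The remaining steps are routine. Multiplying by the integrating factor $e^{\sigma t}$ and integrating over $[\tau,t]\subseteq[\tau,0]$ yields
$$
E(t) \le e^{-\sigma t}\Big(e^{\sigma\tau}E(\tau) + \int_\tau^0 e^{\sigma\xi}\mathcal{R}(\xi,\omega)\,d\xi\Big).
$$
For the initial term, $E(\tau)$ is bounded by $c(1+\|(u_0,v_0)\|_{\hone\times\ltwo}^{\gamma+1})$ via \eqref{f1}–\eqref{F2} and $\hone\hookrightarrow L^{\gamma+1}$, so $(u_0,v_0)\in B(\theta_\tau\omega)$ with $B\in\mathcal{D}$ tempered makes $e^{\sigma\tau}E(\tau)\to0$ as $\tau\to-\infty$; this is what lets me pick $T=T(B,\omega)<0$ so that this contribution is $\le1$ for all $\tau\le T$. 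For the integral, since the Wiener path grows sublinearly ($|\omega(\xi)|=o(|\xi|)$ as $\xi\to-\infty$), the quantity $R(\omega):=1+c\int_{-\infty}^0 e^{\sigma\xi}\big(1+|\omega(\xi)|^2+|\omega(\xi)|^{\gamma+1}\big)\,d\xi$ is finite for $P$-a.e.\ $\omega$ and tempered with respect to $(\theta_t)_{t\in\R}$; it dominates the integral uniformly in $\tau$ and gives \eqref{lem31_1}. Estimate \eqref{lem31_2} comes from the \emph{same} computation by retaining, rather than discarding, the leftover dissipation: keeping a term $c_0 e^{\sigma\xi}(\|u\|_{H^1}^2+\|v\|^2)$ on the left before integrating, and using $E\ge -c\|\phi_3\|_1$ to drop $-e^{\sigma t}E(t)$, bounds $c_0\int_\tau^t e^{\sigma\xi}(\|u\|_{H^1}^2+\|v\|^2)\,d\xi$ by the same $R(\omega)$. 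Throughout, the formal energy identities are justified by the standard Galerkin/regularization argument underlying the well-posedness cited after \eqref{pde3}. I regard the nonlinear forcing estimate of the second paragraph as the only genuine obstacle; everything else is bookkeeping with Young's inequality and the temperedness definitions.
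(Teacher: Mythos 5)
Your proposal is correct and follows essentially the same route as the paper's proof: the same energy functional $E$, the same inner-product computation of \eqref{pde2} with $v$ rewritten via \eqref{pde1}, absorption of the critical forcing term $\omega(t)(f(x,u),h)$ into the $F$-dissipation supplied by \eqref{f2}--\eqref{F2} via H\"older and Young, integration against $e^{\sigma t}$ while retaining the extra quadratic dissipation for \eqref{lem31_2}, and the same tempered bound $R(\omega)$ built from $\int_{-\infty}^0 e^{\sigma\xi}\left(1+|\omega(\xi)|^2+|\omega(\xi)|^{\gamma+1}\right)d\xi$ together with the sublinear growth of Wiener paths. The only cosmetic difference is that you absorb $\varepsilon\|u\|_{\gamma+1}^{\gamma+1}$ by an $\varepsilon$-Young step, whereas the paper first replaces $\left(\int|u|^{\gamma+1}\right)^{\gamma/(\gamma+1)}$ by $\left(\int(F+\phi_3)\right)^{\gamma/(\gamma+1)}$ and then applies Young; the two orderings are interchangeable.
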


\begin{proof}
 Taking the inner product of \eqref{pde2}
 with $v$ in $L^2(\mathbb{R}^3)$, we get
 $$
  {\frac 12} {\frac d{dt}} \| v\|^2
  + (\alpha - \delta) \| v \|^2
  + (\lambda + \delta^2 -\alpha \delta ) (u,v)
  -(\Delta u, v)
  +  ( f(x,u),   v)
  $$
  \begin{equation}
  \label{p31_1}
  =(g,v) + (\delta -\alpha)
  (h, v)\omega(t).
 \end{equation}
By \eqref{pde1} we have
\be
\label{p311_1}
(u,v) = {\frac 12} {\frac d{dt}} \| u \|^2
+ \delta \| u \|^2
- (u, h) \omega (t),
\ee
\be
\label{p311_2}
-(\Delta  u, v)
={\frac 12} {\frac d{dt}} \| \nabla u \|^2
+ \delta \| \nabla u \|^2
-(\nabla u, \nabla h) \omega (t),
\ee
and
\be
\label{p311_3}
(f(x,u), v )
= {\frac d{dt}} \int_{\R^3} F(x, u) dx
+ \delta (f(x,u), u)
- (f(x,u), h) \omega (t).
\ee
It follows from \eqref{p31_1}-\eqref{p311_3} that
 $$
   {\frac d{dt}}  \left (
  \| v \|^2 + (\lambda +\delta^2 -\alpha \delta) \| u \|^2
  + \| \nabla u \|^2 + 2 \int_{\mathbb{R}^3} F(x, u)dx
 \right )
 $$
 $$
  +2 (\alpha -\delta) \| v\|^2
  + 2 \delta (\lambda + \delta^2 -\alpha \delta ) \| u \|^2
  +2 \delta \| \nabla u \|^2
  + 2 \delta (f(x,u), u)
 $$
 $$
 = 2(\lambda + \delta^2 -\alpha \delta) (h,u) \omega(t)
 + 2 (\nabla u, \nabla h) \omega (t)
 + 2 (f(x, u), h) \omega(t)
 $$
 \begin{equation}
 \label{p31_2}
 + 2(g,v) + 2 (\delta -\alpha) (h,v) \omega (t).
 \end{equation}
 We  now estimate every term on the right-hand side
 of \eqref{p31_2}.
For the first term, by \eqref{delta}  we have
\be
\label{p311_10}
2(\lambda + \delta^2 -\alpha \delta) (h,u) \omega(t)
\le
(\lambda + \delta^2 -\alpha \delta) \| u \|^2
+ c \| h\|^2 |\omega (t) |^2.
\ee
The second term on the right-hand side of \eqref{p31_2}
satisfies
\be
\label{p311_11}
2 (\nabla u, \nabla h) \omega (t)
\le \delta \| \nabla u \|^2
+ c \| \nabla h \|^2 | \omega (t) |^2.
\ee
For the  third  term on the right-hand side of \eqref{p31_2},
 by  \eqref{f1} and \eqref{F2}, we obtain
 $$
 2 (f(x,u), h) \omega (t)
 \le 2 \| \phi_1 \| \|h\| |\omega (t) |
 + c \left ( \int_{\mathbb{R}^3} |u|^{\gamma +1}
 \right )^{\frac \gamma{\gamma +1}} \| h\|_{\gamma +1} |\omega (t) |
 $$
 $$
 \le
 2 \| \phi_1 \| \|h\| |\omega (t) |
 +
 c \left ( \int_{\mathbb{R}^3} (F(x,u) + \phi_3)
 \right )^{\frac \gamma{\gamma +1}} \| h\|_{\gamma +1} |\omega (t) |
 $$
\begin{equation}
\label{p31_5}
 \le
 2 \| \phi_1 \| \|h\| |\omega (t) |
 +
 \delta c_2
  \int_{\mathbb{R}^3}  F(x,u)dx
   + \delta c_2
  \int_{\mathbb{R}^3}
  \phi_3(x) dx + c
   \| h\|_{H^1}^{\gamma +1} |\omega (t) |^{\gamma +1}.
\end{equation}
Similarly, by Young's inequality, the last two terms
  on the right-hand side of \eqref{p31_2} are bounded by
\be
\label{p311_12}
2 |(g,v)| + 2 |(\delta -\alpha) (h,v) \omega (t)|
\le
  (\alpha -\delta ) \| v \|^2
+ c \| h\|^2 |\omega (t) |^2 + c \| g \|^2.
\ee
 By \eqref{f2} we also have
 \begin{equation}
 \label{p31_3}
 (f(x,u), u)
 \ge c_2 \int_{\mathbb{R}^3} F(x, u) dx
 + \int_{\mathbb{R}^3} \phi_2 (x) dx.
 \end{equation}
By
\eqref{p31_2}-\eqref{p31_3}, we find that
$$
 {\frac d{dt}} \left (
  \| v \|^2 + (\lambda +\delta^2 -\alpha \delta) \| u \|^2
  + \| \nabla u \|^2 + 2 \int_{\mathbb{R}^3} F(x, u)dx
 \right )
 $$
 $$
  +  (\alpha -\delta) \| v\|^2
  +   \delta (\lambda + \delta^2 -\alpha \delta ) \| u \|^2
  +  \delta \| \nabla u \|^2
  +   \delta c_2 \int_{\mathbb{R}^3} F(x, u) dx
 $$
\begin{equation}
\label{p31_10}
\le c \left (
1 + |\omega (t) |^2 + |\omega (t)|^{\gamma +1} \right ).
\end{equation}
By \eqref{F2} and \eqref{kappa}  we have
$$
\delta c_2 \int_{\mathbb{R}^3} F(x, u) dx
\ge 2 \sigma \int_{\mathbb{R}^3} F(x, u) dx
+ (2\sigma -\delta c_2 ) \int_{\mathbb{R}^3} \phi_3(x) dx,
$$
which along with
\eqref{p31_10} implies  that
$$
 {\frac d{dt}} \left (
  \| v \|^2 + (\lambda +\delta^2 -\alpha \delta) \| u \|^2
  + \| \nabla u \|^2 + 2 \int_{\mathbb{R}^3} F(x, u)dx
 \right )
 $$
 $$
  +  \sigma \left ( \| v\|^2
  +    (\lambda + \delta^2 -\alpha \delta ) \| u \|^2
  +   \| \nabla u \|^2
  +  2 \int_{\mathbb{R}^3} F(x, u) dx \right )
 $$
  \begin{equation}
\label{p31_11}
 + \sigma \left ( \| v\|^2
  +    (\lambda + \delta^2 -\alpha \delta ) \| u \|^2
  +   \| \nabla u \|^2 \right )
\le c \left (
1 + |\omega (t) |^2 + |\omega (t)|^{\gamma +1} \right ).
\end{equation}
Integrating \eqref{p31_11} on $(\tau, t)$
with $t \le 0$, we get
$$
 e^{\sigma t} \left (
 \| v(t, \tau, \omega)\|^2
+ (\lambda +\delta^2 -\alpha \delta) \|u(t, \tau, \omega)\|^2
+ \| \nabla u(t, \tau, \omega) \|^2
+ 2 \int_{\mathbb{R}^3} F(x, u) dx
\right )
$$
$$
+ \sigma \int_\tau^t e^{\sigma \xi}
\left ( \| v\|^2
  +    (\lambda + \delta^2 -\alpha \delta ) \| u \|^2
  +   \| \nabla u \|^2 \right )d\xi
  $$
  $$
  \le
  e^{\sigma \tau}
  \left ( \| v_0\|^2
  +    (\lambda + \delta^2 -\alpha \delta ) \| u_0 \|^2
  +   \| \nabla u_0 \|^2  + 2 \int_{\mathbb{R}^3} F(x,u_0) dx
  \right )
  $$
  \begin{equation}
  \label{p31_20}
  +c \int_\tau ^t
  e^{\sigma \xi}
  \left (
1 + |\omega (\xi) |^2 + |\omega (\xi)|^{\gamma +1} \right )d\xi.
 \end{equation}
 By \eqref{F3} we have
 $$
 \int_{\mathbb{R}^3} F(x,u_0) dx
 \le c \left (
 1 + \|u_0\|^2 + \| u_0\|_{H^1}^{\gamma +1}
 \right ),
 $$
 which along with   $(u_0, v_0) \in B(\theta_\tau \omega)$  implies that
 $$
  e^{\sigma \tau}
  \left ( \| v_0\|^2
  +    (\lambda + \delta^2 -\alpha \delta ) \| u_0 \|^2
  +   \| \nabla u_0 \|^2  + 2 \int_{\mathbb{R}^3} F(x,u_0) dx
  \right )
  $$
  \begin{equation}
  \label{p31_300}
  \le c e^{\sigma \tau}
  \left ( 1+  \|v_0\|^2
  + \| u_0 \|_{H^1}^2
  + \| u_0 \|_{H^1}^{\gamma +1}
  \right )
  \to 0 \quad \mbox{as} \ \tau \to -\infty.
  \end{equation}
  Therefore, there exists $T=T(B, \omega)<0$
  such that for all $\tau \le T$,
   \begin{equation}
  \label{p31_30}
  e^{\sigma \tau}
  \left ( \| v_0\|^2
  +    (\lambda + \delta^2 -\alpha \delta ) \| u_0 \|^2
  +   \| \nabla u_0 \|^2  + 2 \int_{\mathbb{R}^3} F(x,u_0) dx
  \right ) \le r(\omega),
   \end{equation}
   where
   $$r(\omega) =
   \int_{-\infty} ^0
  e^{\sigma \xi}
  \left (
1 + |\omega (\xi) |^2 + |\omega (\xi)|^{\gamma +1} \right )d\xi.
$$
Notice that $r(\omega)$ is well defined since $\omega(\xi)$ has
at most linear growth rate as $|\xi| \to \infty$.
By   and \eqref{p31_20} and \eqref{p31_30}
we obtain that,  for all $\tau \le T$ and $t\in [\tau, 0]$,
  $$
 e^{\sigma t}
 \left (
 \| v(t, \tau, \omega)\|^2
+ (\lambda +\delta^2 -\alpha \delta) \|u(t, \tau, \omega)\|^2
+ \| \nabla u(t, \tau, \omega) \|^2 + 2 \int_{\R^3} F(x,u) dx
\right )
$$
\be
\label{p31_40}
+  \int_\tau^t e^{\sigma \xi}
\left ( \| v\|^2
  +    (\lambda + \delta^2 -\alpha \delta ) \| u \|^2
  +   \| \nabla u \|^2 \right )d\xi
  \le c (1 + r(\omega)).
 \ee
By \eqref{F2}, we find that, for all $t \le 0$,
\be
\label{p31_42}
-2 e^{\sigma t} \int_{\R^3} F(x,u) dx
\le
 2 e^{\sigma t} \int_{\R^3} \phi_3(x) dx
\le 2 \int_{\R^3} | \phi_3 (x) | dx.
 \ee
By
\eqref{p31_40} and \eqref{p31_42} we have that,
for all $\tau \le T$ and $t\in [\tau, 0]$,
  $$
 e^{\sigma t}
 \left (
 \| v(t, \tau, \omega)\|^2
+ (\lambda +\delta^2 -\alpha \delta) \|u(t, \tau, \omega)\|^2
+ \| \nabla u(t, \tau, \omega) \|^2
\right )
$$
$$
+  \int_\tau^t e^{\sigma \xi}
\left ( \| v\|^2
  +    (\lambda + \delta^2 -\alpha \delta ) \| u \|^2
  +   \| \nabla u \|^2 \right )d\xi
  \le c (1 + r(\omega)),
 $$
which implies
  \eqref{lem31_1} and
  \eqref{lem31_2}   with
  $R(\omega) = c(1 + r(\omega))$.
Next we show that $R(\omega)$ is tempered, that is,
for every $\beta>0$, we want to prove
\be
\label{p311_80}
e^{\beta \tau} R(\theta_\tau \omega) \to 0
\quad \mbox{as} \quad \tau \to -\infty.
\ee
Without loss of generality, we now assume $\beta \le
\sigma$. Then we have
$$
e^{\beta \tau} R(\theta_\tau \omega)
=c e^{\beta \tau}
+ ce^{\beta \tau}\int^0_{-\infty}
e^{\sigma \xi} \left ( |(\theta_\tau \omega) (\xi )|^2
+ |(\theta_\tau \omega) (\xi )|^{\gamma +1}
\right ) d \xi
$$
$$
\le
 c e^{\beta \tau}
+
ce^{\beta \tau}\int^0_{-\infty}
e^{\beta \xi} \left ( |(\theta_\tau \omega) (\xi )|^2
+ |(\theta_\tau \omega) (\xi )|^{\gamma +1}
\right ) d \xi
$$
$$
\le
 c e^{\beta \tau}
+
ce^{\beta \tau}\int^0_{-\infty}
e^{\beta \xi} \left ( | \omega (\tau )|^2 + | \omega (\tau )|^{\gamma +1}
\right ) d\xi
$$
$$ +
ce^{\beta \tau}\int^0_{-\infty}
e^{\beta \xi} \left ( | \omega (\tau + \xi )|^2 + | \omega (\tau + \xi )|^{\gamma +1}
\right ) d\xi
$$
\be
\label{p311_81}
\le
 c e^{\beta \tau}
+ {\frac c\beta} e^{\beta \tau}
  \left ( | \omega (\tau )|^2 + | \omega (\tau )|^{\gamma +1} \right )
  +
c  \int^\tau_{-\infty}
e^{\beta s} \left ( | \omega (s )|^2 + | \omega (s )|^{\gamma +1}
\right ) ds.
\ee
Then \eqref{p311_80} follows from
\eqref{p311_81} since
$\omega$ has at most linear growth rate at infinity.
This completes the proof.
\end{proof}

 We now derive an energy equation for problem \eqref{pde1}-\eqref{pde3}.
To this end,
denote by, for $(u,v) \in \hone \times \ltwo$,
\be
\label{ener1}
E(u,v) = \| v \|^2
+ (\lambda + \delta^2 -\alpha \delta) \| u \|^2
+ \| \nabla u \|^2
+ 2 \int_{\R^3} F(x, u) dx,
\ee
and
$$
\Psi(u(t, \tau, \omega, u_0),v(t, \tau, \omega, v_0))
 $$
$$=  -2(\alpha-\delta -2 \sigma ) \| v \|^2
-2 (\delta -2\sigma)  (\lambda + \delta^2 -\alpha \delta) \| u \|^2
-2 (\delta -2 \sigma) \| \nabla u \|^2
$$
$$
+ 8 \sigma \int_{\R^3} F(x, u) dx
-2 \delta \int_{\R^3} f(x,u)u dx
+ 2(\lambda +\delta^2 -\alpha \delta) (u, h) \omega(t)
$$
\be
\label{ener2}
+ 2(\nabla u, \nabla h)\omega(t)
+2\omega(t) \int_{\R^3} f(x, u) h(x)  dx
+ 2(g,v) + 2(\delta -\alpha) (v, h) \omega(t).
\ee
Then it follows from \eqref{p31_2} that
\be
\label{ener3}
{\frac d{dt}} E
+ 4 \sigma E
=\Psi.
\ee
Integrating \eqref{ener3} on $(\tau,  t)$ we get
\be
\label{ener}
E(u(t, \tau, \omega, u_0), v(t, \tau, \omega, v_0) )
$$
$$
=
e^{-4 \sigma (t-\tau)} E(u_0, v_0)
+ \int_\tau^t e^{4 \sigma (\xi -t)}
\Psi(u(\xi, \tau, \omega, u_0), v(\xi, \tau, \omega, v_0))d\xi.
\ee
The energy equation \eqref{ener} will be used
to prove the pullback asymptotic compactness
of solutions in the last section.

In what follows,  we   derive   uniform estimates
on the tails of solutions when $x$ and $t$ approach infinity.
These estimates will be used to overcome the difficulty caused
by   non-compactness of embeddings $\hone \hookrightarrow
L^p(\R^3)$ for $ p \le 6$, and
  are crucial for proving the pullback asymptotic compactness
of the random dynamical system.
Given $k\ge 1$, denote by   $Q_{k} =$\{$
 x\in \mathbb{R}^3  $: $|x|<k\}$
 and $\mathbb{R}^3 \backslash Q_{k}$   the complement of $Q_{k}$.

\begin{lem}
\label{lem32}
 Assume that $g \in L^2(\mathbb{R}^3)$, $ h  \in  H^1(\mathbb{R}^3)$ and
 \eqref{f1}-\eqref{f3} hold. Let
 $B=\{B(\omega)\}_{\omega \in \Omega}\in \mathcal{D}$.
  Then for every $\epsilon>0$ and
   $P$-a.e. $\omega \in \Omega$,
 there exist $T= T (B, \omega, \epsilon) < 0$
  and $k_0=k_0(\omega, \epsilon)>0$
  such that for all $\tau \le T $ and $k \ge k_0$,
 the solution $(u(\cdot,\tau, \omega, u_0), v(\cdot,\tau, \omega, v_0))$
 of problem \eqref{pde1}-\eqref{pde3}
 with $(u_0, v_0) \in B(\theta_\tau \omega)$ satisfies,
 for any $t \in [\tau, 0]$,
\begin{equation}
\label{lem32_1}
\int_{\mathbb{R}^3 \backslash Q_{k}}
\left (
 |  u(t, \tau, \omega, u_0) |^2  + |\nabla u(t, \tau, \omega, u_0)|^2
   +  | v (t,\tau,   \omega, v_0 )  |^2
   \right ) dx
 \le \epsilon e^{-\sigma t}.
 \end{equation}
 \end{lem}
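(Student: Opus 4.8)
The plan is to localize the energy estimate of Lemma \ref{lem31} to the exterior region by the cutoff technique of \cite{wan1}. I would fix a smooth function $\rho:\R^+\to[0,1]$ with $\rho(s)=0$ for $0\le s\le 1$, $\rho(s)=1$ for $s\ge 2$, and $|\rho'(s)|\le c$, and set $\rho_k(x)=\rho(|x|^2/k^2)$. Then $\rho_k$ vanishes on $Q_k$, equals $1$ outside $Q_{\sqrt{2}\,k}$, and $|\nabla\rho_k|\le c/k$ with support in the annulus $k\le|x|\le\sqrt{2}\,k$. Since $\rho_k=1$ outside $Q_{\sqrt{2}\,k}$, it suffices to bound the weighted integral $\int_{\R^3}\rho_k(|u|^2+|\nabla u|^2+|v|^2)\,dx$ and then relabel $k$.

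Next I would take the inner product of \eqref{pde2} with $\rho_k v$ in $\ltwo$ and repeat the manipulations leading from \eqref{p31_1} to \eqref{p31_2}, now carrying the weight $\rho_k$ through the identities \eqref{p311_1}--\eqref{p311_3}. This produces a differential identity for the weighted energy
$$E_k(t)=\int_{\R^3}\rho_k\left(|v|^2+(\lambda+\delta^2-\alpha\delta)|u|^2+|\nabla u|^2+2F(x,u)\right)dx.$$
The only genuinely new term arises from integrating $-(\Delta u,\rho_k v)$ by parts: besides the weighted version of \eqref{p311_2} one picks up $\int_{\R^3}(\nabla u\cdot\nabla\rho_k)\,v\,dx$, which is bounded by $\tfrac{c}{k}(\|\nabla u\|^2+\|v\|^2)$; by Lemma \ref{lem31} this is $O(1/k)$ uniformly in $t\in[\tau,0]$ and hence negligible as $k\to\infty$.

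Every forcing contribution on the right-hand side is now weighted by $\rho_k$, hence restricted to $\{|x|\ge k\}$, and is estimated as in \eqref{p311_10}--\eqref{p311_12}. The delicate step---and the main obstacle---is the critical term $2\omega(t)\int_{\R^3}\rho_k f(x,u)h\,dx$. Following \eqref{p31_5}, I would use \eqref{f1} together with \eqref{F2} and a weighted H\"older inequality to bound it by $\delta c_2\int_{\R^3}\rho_k F(x,u)\,dx$ (absorbed on the left) plus a remainder controlled by $\|h\|_{L^{\gamma+1}(|x|\ge k)}^{\gamma+1}|\omega(t)|^{\gamma+1}$ and the tail integrals $\int_{|x|\ge k}(|g|^2+|h|^2+|\nabla h|^2+\phi_1^2+|\phi_2|+\phi_3)\,dx$. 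Since $h\in H^1(\R^3)\hookrightarrow L^{\gamma+1}(\R^3)$ for $\gamma\le 3$, while $g,\phi_1\in L^2$ and $\phi_2,\phi_3\in L^1$, each of these quantities tends to $0$ as $k\to\infty$. After converting $\delta c_2\int\rho_k F$ into $2\sigma\int\rho_k F$ via \eqref{F2} and \eqref{kappa} as in \eqref{p31_10}--\eqref{p31_11}, I obtain a differential inequality
$$\frac{d}{dt}E_k+\sigma E_k\le c\,\eta(k)\left(1+|\omega(t)|^2+|\omega(t)|^{\gamma+1}\right),$$
where $\eta(k)\to 0$ as $k\to\infty$ collects the $O(1/k)$ cutoff error and all of the force tails.

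Integrating over $(\tau,t)$ with $t\le 0$ and using the integrating factor $e^{\sigma t}$ as in \eqref{p31_20} yields $E_k(t)\le e^{-\sigma t}(e^{\sigma\tau}E_k(\tau)+\eta(k)R(\omega))$, where $R(\omega)$ is the tempered function of Lemma \ref{lem31}. Discarding the nonlinear part of $E_k$ from below via \eqref{F2} as in \eqref{p31_42} costs only the vanishing tail $2\int_{|x|\ge k}|\phi_3|\,dx$; writing $\mu(k)=\eta(k)R(\omega)+2\int_{|x|\ge k}|\phi_3|\,dx\to 0$ and using $e^{-\sigma t}\ge 1$, I obtain $\int_{\R^3}\rho_k(|u|^2+|\nabla u|^2+|v|^2)\,dx\le e^{-\sigma t}(e^{\sigma\tau}E_k(\tau)+\mu(k))$. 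Since $\rho_k\le 1$, estimate \eqref{F3} bounds $E_k(\tau)$ by a $k$-independent multiple of $1+\|v_0\|^2+\|u_0\|_{H^1}^2+\|u_0\|_{H^1}^{\gamma+1}$, so the assumption $(u_0,v_0)\in B(\theta_\tau\omega)$ and the temperedness of $B$ force $e^{\sigma\tau}E_k(\tau)\to 0$ as $\tau\to-\infty$ uniformly in $k$, exactly as in \eqref{p31_300}. Choosing $k_0=k_0(\omega,\epsilon)$ with $\mu(k)<\tfrac{\epsilon}{2}$ for $k\ge k_0$, and then $T=T(B,\omega,\epsilon)<0$ with $e^{\sigma\tau}E_k(\tau)<\tfrac{\epsilon}{2}$ for $\tau\le T$, establishes \eqref{lem32_1} after relabeling $k$.
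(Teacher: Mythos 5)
Your proposal follows the paper's own route almost step for step: the same cutoff $\rho(|x|^2/k^2)$, the same weighted energy identity obtained by pairing \eqref{pde2} with $\rho_k v$, the same treatment of the critical term via \eqref{f1}, \eqref{F2} and \eqref{kappa}, the same vanishing force tails, and the same two-step choice of $k_0$ and then $T$ followed by a relabeling of $k$. The overall structure and the final integrated inequality are correct.

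However, one justification as written would fail. You claim that the commutator term, bounded by $\tfrac{c}{k}\bigl(\|\nabla u\|^2+\|v\|^2\bigr)$, is ``by Lemma \ref{lem31} $O(1/k)$ uniformly in $t\in[\tau,0]$,'' and you then absorb it into a right-hand side of the form $c\,\eta(k)\bigl(1+|\omega(t)|^2+|\omega(t)|^{\gamma+1}\bigr)$ with $\eta(k)$ solution-independent. Lemma \ref{lem31} only gives the pointwise bound $\|\nabla u(t)\|^2+\|v(t)\|^2\le e^{-\sigma t}R(\omega)$, which near $t=\tau$ is of size $e^{\sigma|\tau|}R(\omega)$ and blows up as $\tau\to-\infty$; so the commutator is not uniformly $O(1/k)$, and the displayed differential inequality is not valid. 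Worse, if you insert the pointwise bound $\tfrac{c}{k}e^{-\sigma t}R(\omega)$ and integrate against the factor $e^{\sigma t}$, the commutator contributes $\tfrac{c}{k}R(\omega)\,(t-\tau)$, and the factor $(t-\tau)\le|\tau|$ is unbounded as $\tau\to-\infty$: smallness would then require $k$ to grow with $|\tau|$, contradicting the requirement that $k_0$ depend only on $(\omega,\epsilon)$. The repair -- and what the paper does -- is to keep the term $\tfrac{c}{k}\bigl(\|\nabla u\|^2+\|v\|^2\bigr)$ explicit through the integration and then invoke the time-integrated dissipation estimate \eqref{lem31_2}, namely $\int_\tau^t e^{\sigma\xi}\bigl(\|u\|^2_{H^1}+\|v\|^2\bigr)\,d\xi\le R(\omega)$, which yields exactly the contribution $\tfrac{c}{k}R(\omega)$ that appears (as $\eta(k)R(\omega)$) in your integrated inequality. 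With that substitution your argument coincides with the paper's proof.
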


 \begin{proof}
  Take a smooth function $\rho$ such that $0 \leq \rho \leq 1$   for all  $s \in \mathbb{R}$ and
\begin{equation}
\label{rho}
\rho (s)  =
\left \{
\begin{array}{ll}
    0, & \quad  \mbox{if }  \quad   |s| < 1,\\
  1, & \quad  \mbox{if }   \quad |s| > 2.
\end{array}
\right.
\end{equation}
Then there is a positive constant $c$ such that
$|\rho^\prime (s)|   \le c$
for all $s\in \mathbb{R}$.

Taking the inner product of \eqref{pde2}
 with $\rho\left ({\frac {|x|^2}{k^2}}  \right ) v$ in $L^2(\mathbb{R}^3)$, we get
 $$
  {\frac 12} {\frac d{dt}}
 \int_{\mathbb{R}^3}
  \rho\left ({\frac {|x|^2}{k^2}}  \right )|v|^2 dx
  + (\alpha - \delta)\int_{\mathbb{R}^3}
  \rho\left ({\frac {|x|^2}{k^2}}  \right ) |v|^2 dx
  $$
  $$
  + (\lambda + \delta^2 -\alpha \delta )\int_{\mathbb{R}^3}
  \rho\left ({\frac {|x|^2}{k^2}}  \right ) uv dx
  -\int_{\mathbb{R}^3}
  \rho\left ({\frac {|x|^2}{k^2}}  \right )v
   \Delta u  dx
   $$
  \begin{equation}
  \label{p32_1}
  + \int_{\mathbb{R}^3}
  \rho\left ({\frac {|x|^2}{k^2}}  \right )  f(x,u) v dx
  = \int_{\mathbb{R}^3}
  \rho\left ({\frac {|x|^2}{k^2}}  \right )
  ( g v
   + (\delta -\alpha)
   vh\omega(t) ) dx.
 \end{equation}
 By \eqref{pde1} we find that
$$
\int_{\mathbb{R}^3}
  \rho\left ({\frac {|x|^2}{k^2}}  \right ) uv dx
={\frac 12} {\frac d{dt}}
\int_{\mathbb{R}^3}
  \rho\left ({\frac {|x|^2}{k^2}}  \right ) |u|^2 dx
$$
\be
\label{p32_2_a1}
+ \delta
\int_{\mathbb{R}^3}
  \rho\left ({\frac {|x|^2}{k^2}}  \right ) |u|^2 dx
- \int_{\mathbb{R}^3}
  \rho\left ({\frac {|x|^2}{k^2}}  \right ) u h \omega (t) dx ,
\ee
 $$
 -\int_{\mathbb{R}^3}
  \rho\left ({\frac {|x|^2}{k^2}}  \right )v
   \Delta u  dx
   = \int_{\mathbb{R}^3} \nabla u {\frac {2x}{k^2}}
  \rho^\prime \left ({\frac {|x|^2}{k^2}}  \right ) vdx
  + {\frac 12} {\frac d{dt}}
   \int_{\mathbb{R}^3}
  \rho\left ({\frac {|x|^2}{k^2}}  \right )|\nabla u|^2 dx
   $$
   \begin{equation}
   \label{p32_2}
  + \delta \int_{\mathbb{R}^3}
  \rho\left ({\frac {|x|^2}{k^2}}  \right )|\nabla u|^2dx
  -\int_{\mathbb{R}^3}
  \rho\left ({\frac {|x|^2}{k^2}}  \right )
  \nabla u \nabla h \omega (t) dx,
  \end{equation}
and
$$
 \int_{\mathbb{R}^3}
  \rho\left ({\frac {|x|^2}{k^2}}  \right )
f(x, u) v dx
={\frac {d}{dt}}
\int_{\mathbb{R}^3}
  \rho\left ({\frac {|x|^2}{k^2}}  \right ) F(x,u) dx
$$
\be
\label{p32_2_a2}
+ \delta
\int_{\mathbb{R}^3}
  \rho\left ({\frac {|x|^2}{k^2}}  \right )
f(x, u) u dx
- \int_{\mathbb{R}^3}
  \rho\left ({\frac {|x|^2}{k^2}}  \right )
f(x, u) h \omega (t)  dx.
\ee
It follows from
 \eqref{p32_1}-\eqref{p32_2_a2}   that
 $$
{\frac d{dt}}   \int_{\mathbb{R}^3}
  \rho\left ({\frac {|x|^2}{k^2}}  \right )
  \left (|v|^2 +
  (\lambda +\delta^2 -\alpha \delta)  | u  |^2
  +  | \nabla u  |^2 + 2   F(x, u)
 \right ) dx
 $$
 $$
  +\int_{\mathbb{R}^3}
  \rho\left ({\frac {|x|^2}{k^2}}  \right )
  \left (
  2 (\alpha -\delta)  | v |^2
  + 2 \delta (\lambda + \delta^2 -\alpha \delta )  | u  |^2
  +2 \delta  | \nabla u  |^2
  + 2 \delta  f(x,u)  u  \right ) dx
 $$
 $$
 = 2(\lambda + \delta^2 -\alpha \delta) \int_{\mathbb{R}^3}
  \rho\left ({\frac {|x|^2}{k^2}}  \right )
 hu \omega(t)dx
 - 4 \int_{\mathbb{R}^3}
  \rho^\prime\left ({\frac {|x|^2}{k^2}}  \right )
  v \nabla u {\frac x{k^2}} dx
  $$
  $$
  + 2 \int_{\mathbb{R}^3}
  \rho\left ({\frac {|x|^2}{k^2}}  \right )
  f(x,u) h\omega(t) dx
  + 2\int_{\mathbb{R}^3}
  \rho\left ({\frac {|x|^2}{k^2}}  \right )
       \nabla u  \nabla h  \omega (t) dx
 $$
 \begin{equation}
 \label{p32_3}
   + 2 \int_{\mathbb{R}^3}
  \rho\left ({\frac {|x|^2}{k^2}}  \right )
 \left (  gv +   (\delta -\alpha) hv \omega (t)
 \right ) dx.
 \end{equation}
 By \eqref{f2} we have
 \begin{equation}
 \label{p32_4}
 \int_{\mathbb{R}^3}
  \rho\left ({\frac {|x|^2}{k^2}}  \right )
  f(x,u)  u dx
 \ge c_2  \int_{\mathbb{R}^3}
  \rho\left ({\frac {|x|^2}{k^2}}  \right )
   F(x, u) dx
 +  \int_{\mathbb{R}^3}
  \rho\left ({\frac {|x|^2}{k^2}}  \right )
   \phi_2 (x) dx.
 \end{equation}
 By \eqref{f1} and \eqref{F2}
 as in \eqref{p31_5}, we also have
 $$
 2\int_{\mathbb{R}^3}
  \rho\left ({\frac {|x|^2}{k^2}}  \right )
    f(x,u)  h  \omega (t) dx
    \le \int_{\mathbb{R}^3}
  \rho\left ({\frac {|x|^2}{k^2}}  \right )
  |\phi_1|^2 dx
  + c \int_{\mathbb{R}^3}
  \rho\left ({\frac {|x|^2}{k^2}}  \right )
  |h|^2 |\omega(t)|^2 dx
  $$
\begin{equation}
\label{p32_10}
 +
 \delta c_2
   \int_{\mathbb{R}^3}
  \rho\left ({\frac {|x|^2}{k^2}}  \right )
  \left (  F(x,u) +
  \phi_3(x) \right ) dx
  + c \int_{\mathbb{R}^3}
  \rho\left ({\frac {|x|^2}{k^2}}  \right )
  |h|^{\gamma +1} |\omega (t)|^{\gamma +1} dx.
\end{equation}
By the definition of $\rho$ in  \eqref{rho} we have
\begin{equation}
\label{p32_11}
   \int_{\mathbb{R}^3}
  |\rho^\prime\left ({\frac {|x|^2}{k^2}}  \right )
  v \nabla u {\frac x{k^2}}| dx
  \le
    \int_{k \le |x| \le \sqrt{2} k}
  |\rho^\prime| |v| |\nabla u | {\frac {|x|}{k^2}} dx
  \le {\frac ck} \left (\|\nabla u \|^2 + \| v\|^2
  \right ).
 \end{equation}
Using  Young's   inequality
to estimate the remaining terms on
  on the right-hand side of
\eqref{p32_3}, by
\eqref{p32_4}-\eqref{p32_11}, we find that
$$
{\frac d{dt}}   \int_{\mathbb{R}^3}
  \rho\left ({\frac {|x|^2}{k^2}}  \right )
  \left (|v|^2 +
  (\lambda +\delta^2 -\alpha \delta)  | u  |^2
  +  | \nabla u  |^2 + 2   F(x, u)
 \right ) dx
 $$
  $$
  +\int_{\mathbb{R}^3}
  \rho\left ({\frac {|x|^2}{k^2}}  \right )
  \left (
    (\alpha -\delta)  | v |^2
  +   \delta (\lambda + \delta^2 -\alpha \delta )  | u  |^2
  +  \delta  | \nabla u  |^2
  +   \delta c_2  F(x,u)    \right ) dx
 $$
  $$
 \le
 {\frac ck} (\| \nabla u \|^2 + \| v \|^2)
 + c |\omega (t)|^2 \int_{\mathbb{R}^3}
  \rho\left ({\frac {|x|^2}{k^2}}  \right )
  (|h|^2 + |\nabla h |^2 ) dx
  $$
 \begin{equation}
 \label{p32_20}
  + c
  \int_{\mathbb{R}^3}
  \rho\left ({\frac {|x|^2}{k^2}}  \right )
  ( |\phi_1|^2  + |\phi_2| + |\phi_3|
  + |g|^2 + |\omega (t)|^{\gamma +1}
    |h|^{\gamma +1} ) dx.
 \end{equation}
For the last two terms on the right-hand side of
 \eqref{p32_20}, we find that
 there exists $k_1 =k_1(\epsilon)\ge 1$ such that
 for all $k \ge k_1$,
$$
c |\omega (t)|^2 \int_{\mathbb{R}^3}
  \rho\left ({\frac {|x|^2}{k^2}}  \right )
  (|h|^2 + |\nabla h |^2 ) dx
$$
$$
  + c
  \int_{\mathbb{R}^3}
  \rho\left ({\frac {|x|^2}{k^2}}  \right )
  ( |\phi_1|^2  + |\phi_2| + |\phi_3|
  + |g|^2 + |\omega (t)|^{\gamma +1}
    |h|^{\gamma +1} ) dx
$$
$$
= c |\omega (t)|^2 \int_{|x| \ge k}
  \rho\left ({\frac {|x|^2}{k^2}}  \right )
  (|h|^2 + |\nabla h |^2 ) dx
$$
$$
  + c
  \int_{|x| \ge k}
  \rho\left ({\frac {|x|^2}{k^2}}  \right )
  ( |\phi_1|^2  + |\phi_2| + |\phi_3|
  + |g|^2 + |\omega (t)|^{\gamma +1}
    |h|^{\gamma +1} ) dx
$$
$$
 \le  c |\omega (t)|^2 \int_{|x| \ge k}
  (|h|^2 + |\nabla h |^2 ) dx
$$
$$
  + c
  \int_{|x| \ge k}
  ( |\phi_1|^2  + |\phi_2| + |\phi_3|
  + |g|^2 + |\omega (t)|^{\gamma +1}
    |h|^{\gamma +1} ) dx
$$
\be
\label{p32_30_a1}
\le
  c\epsilon (1 + |\omega (t)|^2 + |\omega (t) |^{\gamma +1} ),
\ee
where we have used the fact that
  $\phi_1, g \in L^2(\mathbb{R}^n)$,
 $\phi_2, \phi_3 \in L^1(\mathbb{R}^n)$,
 $h \in H^1(\mathbb{R}^n)$,
and the embedding $\hone \hookrightarrow L^{\gamma +1}(\R^3)$
with $\gamma \le 3$.
It follows from \eqref{p32_20}-\eqref{p32_30_a1} that,
   for all $k\ge k_1$,
 $$
{\frac d{dt}}   \int_{\mathbb{R}^3}
  \rho\left ({\frac {|x|^2}{k^2}}  \right )
  \left (|v|^2 +
  (\lambda +\delta^2 -\alpha \delta)  | u  |^2
  +  | \nabla u  |^2 + 2   F(x, u)
 \right ) dx
 $$
  $$
  +\int_{\mathbb{R}^3}
  \rho\left ({\frac {|x|^2}{k^2}}  \right )
  \left (
    (\alpha -\delta)  | v |^2
  +   \delta (\lambda + \delta^2 -\alpha \delta )  | u  |^2
  +  \delta  | \nabla u  |^2
  +   \delta c_2  F(x,u)    \right ) dx
 $$
  \begin{equation}
  \label{p32_30}
 \le
 {\frac ck} (\| \nabla u \|^2 + \| v \|^2)
  + c\epsilon (1 + |\omega (t)|^2 + |\omega (t) |^{ \gamma +1} ).
 \end{equation}
 By \eqref{F2}, \eqref{kappa} and \eqref{p32_30} we find that
 for all $k\ge k_1$,
 $$
{\frac d{dt}}   \int_{\mathbb{R}^3}
  \rho\left ({\frac {|x|^2}{k^2}}  \right )
  \left (|v|^2 +
  (\lambda +\delta^2 -\alpha \delta)  | u  |^2
  +  | \nabla u  |^2 + 2   F(x, u)
 \right ) dx
 $$
  $$
  +\sigma
   \int_{\mathbb{R}^3}
  \rho\left ({\frac {|x|^2}{k^2}}  \right )
  \left (|v|^2 +
  (\lambda +\delta^2 -\alpha \delta)  | u  |^2
  +  | \nabla u  |^2 + 2   F(x, u)
 \right ) dx
  $$
  \begin{equation}
  \label{p32_40}
 \le
 {\frac ck} (\| \nabla u \|^2 + \| v \|^2)
  + c\epsilon (1 + |\omega (t)|^2 + |\omega (t) |^{\gamma +1} ).
 \end{equation}
 Integrating \eqref{p32_40} on $(\tau, t)$
 with $t \le 0$,
 by Lemma \ref{lem31} we find that, for
 all $k\ge k_1$,
 $$
 e^{\sigma t}
  \int_{\mathbb{R}^3}
  \rho ( {\frac {|x|^2}{k^2}}    )
  \left (|v(t, \tau, \omega)|^2 +
  (\lambda +\delta^2 -\alpha \delta)  | u(t,\tau,\omega)  |^2
  +  | \nabla u (t,\tau,\omega) |^2 + 2   F(x, u)
 \right )
 $$
 $$
 \le
 e^{\sigma \tau}
  \int_{\mathbb{R}^3}
  \rho  ({\frac {|x|^2}{k^2}}    )
  \left (|v_0|^2 +
  (\lambda +\delta^2 -\alpha \delta)  | u_0  |^2
  +  | \nabla u_0  |^2 + 2   F(x, u_0)
 \right ) dx
 $$
 $$
 + {\frac ck}\int_\tau^t e^{\sigma \xi}
 (\| \nabla u(\xi) \|^2 + \| v(\xi) \|^2 ) d\xi
 + c\epsilon \int_\tau^t e^{\sigma \xi}
 (|\omega (\xi)|^2 + |\omega (\xi)|^{\gamma +1}) d\xi
 + c\epsilon
 $$
  $$
 \le
 e^{\sigma \tau}
  \int_{\mathbb{R}^3}
  \rho  ({\frac {|x|^2}{k^2}}    )
  \left (|v_0|^2 +
  (\lambda +\delta^2 -\alpha \delta)  | u_0  |^2
  +  | \nabla u_0  |^2 + 2   F(x, u_0)
 \right ) dx
 $$
\begin{equation}
\label{p32_90}
 + {\frac ck}R(\omega)
 + c\epsilon \int_{-\infty}^0 e^{\sigma \xi}
 (|\omega (\xi)|^2 + |\omega (\xi)|^{\gamma +1}) d\xi
 + c\epsilon,
 \end{equation}
where $R(\omega)$ is the positive tempered random function
in Lemma \ref{lem31}.
 As in \eqref{p31_300},
 the first term on the right-hand side of \eqref{p32_90}
 goes to zero as $\tau \to -\infty$. Hence,
 there exists $T=T(B, \omega, \epsilon)<0$ such that all
 $\tau \le T$,
 \begin{equation}
 \label{p32_91}
 e^{\sigma \tau}
  \int_{\mathbb{R}^3}
  \rho  ({\frac {|x|^2}{k^2}}    )
  \left (|v_0|^2 +
  (\lambda +\delta^2 -\alpha \delta)  | u_0  |^2
  +  | \nabla u_0  |^2 + 2   F(x, u_0)
 \right ) dx \le \epsilon.
 \end{equation}
 By \eqref{p32_90}-\eqref{p32_91},
 there exists $  k_2(\epsilon) \ge k_1(\epsilon)$
 such that for all $\tau \le T$ and $k \ge k_2$,
\be
\label{p32_95}
   e^{\sigma t}
   \int_{\mathbb{R}^3}
  \rho ( {\frac {|x|^2}{k^2}}    )
  \left (|v(t, \tau, \omega)|^2 +
  (\lambda +\delta^2 -\alpha \delta)  | u(t,\tau,\omega)  |^2
  +  | \nabla u(t, \tau, \omega)  |^2 + 2   F(x, u)
 \right )dx \le   \epsilon c \ r(\omega),
 \ee
 where $r(\omega) =
   1 + R(\omega) + \int_{-\infty}^0 e^{\sigma \xi}
 (|\omega (\xi)|^2 + |\omega (\xi)|^{\gamma +1}) d\xi  $.
By \eqref{F2} we have, for $t \le 0$,
$$-2 e^{\sigma t} \int_{\R^3}
\rho ( {\frac {|x|^2}{k^2}}    ) F(x, u) dx
\le 2 e^{\sigma t}
\int_{\R^3}
\rho ( {\frac {|x|^2}{k^2}}    ) \phi_3 (x) dx
\le
2
\int_{|x| \ge k }
\rho ( {\frac {|x|^2}{k^2}}    ) \phi_3 (x) dx
\le
2 \int_{|x| \ge k }
 | \phi_3 (x) | dx.
$$
Since $\phi_3 \in L^1(\R^3)$, there is $k_3 =k_3(\epsilon) \ge k_2$
such that for all $k \ge k_3$, the right-hand side of the above
is bounded by $\epsilon$. Hence we have, for all
$k \ge k_3$ and $t \le 0$,
\be
\label{p32_96}
-2 e^{\sigma t} \int_{\R^3}
\rho ( {\frac {|x|^2}{k^2}}    ) F(x, u) dx
 \le \epsilon.
\ee
By \eqref{p32_95}-\eqref{p32_96} we get that, for all
$\tau \le T$, $t \in [\tau, 0]$  and $k \ge k_3$,
$$
 e^{\sigma t}  \int_{\mathbb{R}^3}
  \left (|v(t, \tau, \omega)|^2 +
  (\lambda +\delta^2 -\alpha \delta)  | u(t,\tau,\omega)  |^2
  +  | \nabla u(t,\tau,\omega) |^2
 \right ) dx
   \le  \epsilon + c   \epsilon \  r(\omega).
 $$
By the definition of $\rho$ in \eqref{rho}, we finally obtain that,
 for all $\tau \le T$, $t \in [\tau, 0]$  and $k \ge k_3$,
 $$
  e^{\sigma t}  \int_{|x| \ge \sqrt{2} k}
  \left (|v(t, \tau, \omega)|^2 +
  (\lambda +\delta^2 -\alpha \delta)  | u(t,\tau,\omega)  |^2
  +  | \nabla u(t,\tau,\omega) |^2
 \right ) dx
 $$
  $$
\le
 e^{\sigma t}  \int_{\mathbb{R}^3}\rho ( {\frac {|x|^2}{k^2}}    )
  \left (|v(t, \tau, \omega)|^2 +
  (\lambda +\delta^2 -\alpha \delta)  | u(t,\tau,\omega)  |^2
  +  | \nabla u(t,\tau,\omega) |^2
 \right ) dx
   \le  \epsilon + c   \epsilon \  r(\omega).
 $$
 which completes the proof. \end{proof}

\section{Random Attractors}
\setcounter{equation}{0}

In this section, we prove   existence of a
$\mathcal{D}$-random attractor  for the
stochastic wave equation on $\R^3$.
We first show that the random dynamical
system $\Phi$ has a closed random absorbing
set in $\mathcal{D}$, and then prove that
$\Phi$ is $\mathcal{D}$-pullback asymptotically
 compact.

 By Lemma \ref{lem31} we find that
 for every   $B=\{B(\omega)\}_{\omega \in \Omega}
\in \mathcal{D}$, and  $P$-a.e. $\omega \in \Omega$,
there is $T=T(B, \omega)<0$ such that
for all $\tau  \le  T$, the solution
$(u,v)$ of problem \eqref{pde1}-\eqref{pde3}
with $(u_0, v_0) \in B(\theta_\tau \omega)$
satisfies
\be
\label{sec5_1}
\| u(0, \tau, \omega, u_0)\|^2_{\hone}
+ \| v(0, \tau, \omega, v_0)\|^2
\le R(\omega),
\ee
where $R(\omega)$ is the positive tempered random function
in Lemma \ref{lem31}.
Since
$z(t, \tau, \omega, z_0) =
v(t,\tau, \omega, v_0)
+ h \omega(t)$
with
$z_0 = v_0 + h \omega (\tau)$, it follows from
\eqref{sec5_1} that
$(u(t, \tau, \omega, u_0), z(t, \tau, \omega, z_0) )$
 with $(u_0, z_0) \in B(\theta_\tau \omega)$
satisfies,  for all $\tau \le T$,
$$
\| u(0, \tau, \omega, u_0)\|_{H^1}^2
+ \| z(0,\tau, \omega, z_0)\|^2
=
\| u(0, \tau, \omega, u_0)\|_{H^1}^2
+ \| v(0,\tau, \omega, v_0  ) \|^2
\le R(\omega),
$$
which along with \eqref{shift}
implies that, for all $t \ge -T$,
\be
\label{sec5_2}
\|\Phi (t, \theta_{-t} \omega, (u_0, z_0))\|_{H^1\times L^2}^2
=
\| u(0, -t, \omega, u_0)\|_{H^1}^2
+ \| v(0,-t, \omega, v_0  ) \|^2
\le R(\omega).
\ee
 Denote by
\begin{equation}
\label{absorb}
{\tilde{B}}(\omega) = \{ (u,z) \in H^1(\mathbb{R}^3) \times L^2(\mathbb{R}^3):
\| u\|_{H^1 }^2
+ \| z \|^2
\le R(\omega) \}.
\end{equation}
Then \eqref{sec5_2}
shows that
${\tilde{B}} =\{\tilde{B} (\omega)\}_{\omega \in \Omega}$
is a closed random absorbing set
for $\Phi$ in $\mathcal{D}$.
Next, we show  the pullback asymptotic compactness
of $(u,v)$, which is needed to prove
the asymptotic compactness of $\Phi$.

\begin{lem}
\label{lem41}
Assume that $g \in L^2(\mathbb{R}^3)$, $ h  \in  H^1(\mathbb{R}^3)$ and
 \eqref{f1}-\eqref{f3} hold.
 Then  ,
 for $P$-a.e. $\omega \in \Omega$, the sequence
 $\{ ( u(0, -t_n, \omega, u_{0,n} ), v(0, -t_n, \omega, v_{0,n} ) )
 \}$ has a
 convergent  subsequence in $ H^1(\mathbb{R}^3)
\times L^2(\mathbb{R}^3)$ provided
 $t_n \to \infty$ and $(u_{0,n}, v_{0,n}) \in B(\theta_{-t_n} \omega )$ with
$B =\{B(\omega)\}_{\omega \in \Omega } \in \mathcal{D}$.
 \end{lem}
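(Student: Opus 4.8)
The goal is to extract a convergent subsequence of $(u(0,-t_n,\omega,u_{0,n}), v(0,-t_n,\omega,v_{0,n}))$ in $\hone \times \ltwo$. The plan is to combine three ingredients: the uniform bounds from Lemma~\ref{lem31}, the uniform tail estimates from Lemma~\ref{lem32}, and the energy equation \eqref{ener} together with the weak continuity of Lemma~\ref{weak_cont}. Since the embedding $\hone \hookrightarrow L^p(\R^3)$ is not compact on the unbounded domain and the nonlinearity is critical, strong convergence in the energy norm cannot be obtained directly; instead I would first obtain weak convergence of a subsequence and then upgrade it to strong convergence by showing the norms converge to the norm of the weak limit.

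**Weak convergence and the limiting solution.**

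First I would use Lemma~\ref{lem31} (via the absorbing set estimate \eqref{sec5_2}) to conclude that $\{(u(0,-t_n,\omega,u_{0,n}), v(0,-t_n,\omega,v_{0,n}))\}$ is bounded in $\hone \times \ltwo$. Hence, passing to a subsequence, it converges weakly to some limit $(\xi,\eta) \in \hone \times \ltwo$. The key step is to identify this weak limit as the value at time $0$ of a solution defined on all of $(-\infty,0]$. Exploiting the cocycle property \eqref{shift} and the fact that $t_n\to\infty$, I would, for each fixed $T>0$, write the state at time $-T$ as an intermediate point along the trajectory. Using a diagonal argument over an increasing sequence of such intermediate times, together with the uniform bounds and Lemma~\ref{weak_cont}, I would produce a full backward trajectory $(\tilde u(t),\tilde v(t))$ for $t\le 0$ with $(\tilde u(0),\tilde v(0))=(\xi,\eta)$, such that $(u(0,-t_n,\omega,\cdot),v(0,-t_n,\omega,\cdot)) \rightharpoonup (\tilde u(0),\tilde v(0))$ weakly. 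The weak continuity lemma is exactly what guarantees the weak limits of the shifted sequences remain solutions.

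**Upgrading to strong convergence via the energy equation.**

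Since weak convergence plus convergence of norms implies strong convergence in a Hilbert space, it suffices to show $\limsup_n E_0(u_n(0),v_n(0)) \le E_0(\tilde u(0),\tilde v(0))$, where $E_0$ denotes the full energy-type norm associated to \eqref{ener1} (the reverse inequality being automatic from weak lower semicontinuity). Here I would apply the energy equation \eqref{ener} on the interval $(-T,0)$: expressing $E$ at time $0$ as $e^{-4\sigma T}E(u_n(-T),v_n(-T))$ plus the integral of $\Psi$, the first term is uniformly small for large $T$ by the absorbing estimate, and the integral term is handled by passing to the limit. The terms in $\Psi$ that are quadratic in $v$ or involve $\nabla u$ are the obstacle, since weak convergence alone does not control them; this is precisely where the tail estimate \eqref{lem32_1} enters, splitting each integral over $Q_k$ and its complement. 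On $Q_k$ the critical nonlinearity is handled because $\hone(Q_k)\hookrightarrow L^p(Q_k)$ \emph{is} compact for $p<6$, giving strong convergence of the lower-order terms of $\Psi$; the tail contribution is made uniformly small by Lemma~\ref{lem32}. Combining these, one shows the $\Psi$-integral converges and hence the energies converge, yielding strong convergence in $\hone\times\ltwo$.

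**The main obstacle.**

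I expect the hardest part to be the $\limsup$ estimate on the energy through the $\Psi$ integral, specifically controlling the critical term $\int_{\R^3} f(x,u)u\,dx$ and the gradient term $\|\nabla u\|^2$ in the limit. The tail estimates remove the difficulty at spatial infinity, but one must still carefully exploit local compactness on each ball $Q_k$, pass to the limit in the energy identity written for the approximating and the limiting trajectories simultaneously, and then let $k\to\infty$ and $T\to\infty$ in the correct order so that the error terms vanish. Keeping track of the fact that the limiting trajectory satisfies its own energy equation — so that the target $\limsup$ bound is actually attained — is the delicate bookkeeping at the heart of Ball's energy method adapted to the stochastic, unbounded-domain, critical-exponent setting.
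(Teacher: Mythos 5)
Your overall architecture coincides with the paper's: extract a weakly convergent subsequence, use a diagonal argument over intermediate times $-m$ together with Lemma \ref{weak_cont} to identify the weak limit $(\ut,\vt)$ as the image under the flow of weak limits taken at time $-m$, then use the energy equation \eqref{ener} to prove $\limsup_n E\le E(\ut,\vt)$, with the tail estimates of Lemma \ref{lem32} and local compactness handling the critical nonlinearity. However, one concrete step in your plan would fail as described: you assert that the terms of $\Psi$ that are quadratic in $v$ or involve $\nabla u$ are controlled by splitting over $Q_k$ and its complement, with Lemma \ref{lem32} killing the far field and the compact embedding $H^1(Q_k)\hookrightarrow L^p(Q_k)$ giving strong convergence on $Q_k$. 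No such local strong convergence exists: $v$ is only bounded in $\ltwo$, and weak convergence in $L^2(Q_k)$ does not upgrade to norm convergence (the wave system has no smoothing, so nothing compactifies $v$); likewise $\nabla u$ converges only weakly in $L^2(Q_k)$ --- the compact embedding helps $u$ itself, not its gradient. So this splitting cannot yield convergence of $\int_{Q_k}|v|^2\,dx$ or $\int_{Q_k}|\nabla u|^2\,dx$.

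The paper gets around exactly this obstruction by a sign argument, which is the real point of Ball's method here: by the choice of $\sigma$ in \eqref{kappa}, the coefficients of $\|v\|^2$, $\|u\|^2$ and $\|\nabla u\|^2$ in $\Psi$ (see \eqref{ener2}) are $-2(\alpha-\delta-2\sigma)\le 0$ and $-2(\delta-2\sigma)\le 0$, so for the $\limsup$ bound one needs only a $\liminf$ lower bound on these integrals, and that is supplied by weak lower semicontinuity of the norms together with Fatou's lemma --- see \eqref{p41_35}--\eqref{p41_37}. The tail estimates and the compactness of $H^1(Q_k)\hookrightarrow L^2(Q_k)$ are reserved for the genuinely nonlinear terms $\int_{\R^3}F(x,u)\,dx$ and $\int_{\R^3}f(x,u)u\,dx$, which enter $\Psi$ without a favorable sign (see \eqref{p41_38} and \eqref{p41_60}); there the growth bound \eqref{f1} lets the strong local convergence of $u$ (not of $\nabla u$ or $v$) carry the argument. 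With this reassignment of tools --- signs plus Fatou for the quadratic terms, tails plus local compactness for the nonlinear ones --- your outline becomes the paper's proof; as written, the treatment of the $v$ and $\nabla u$ terms is a genuine gap.
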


  \begin{proof}
    Since $t_n \to \infty$, it follows from \eqref{sec5_1} that
 there exists
  $N_1=N_1(B, \omega)>0$ such that
   for all $n \ge N_1$,
  \be
  \label{p41_2}
  \| u(0, -t_n, \omega, u_{0,n})\|^2_{H^1}
  + \| v(0, -t_n, \omega, v_{0,n}) \|^2
  \le  R(\omega).
  \ee
  Notice that \eqref{p41_2} implies that
  there exists
  $(\ut, \vt) \in \hone \times \ltwo$ such that,
  up to a subsequence,
  \be
  \label{p41_3}
  (u(0, -t_n, \omega, u_{0,n}), v(0, -t_n, \omega, v_{0,n}) )
  \to  (\ut, \vt) \  \mbox{weakly in} \ \hone \times \ltwo.
  \ee
  By \eqref{p41_3}  we find that
  \be
  \label{p41_5}
  \liminf_{n \to \infty} \| (u(0, -t_n, \omega, u_{0,n} ),
v(0, -t_n, \omega, v_{0,n}) ) \|_{H^1 \times L^2}
  \ge \| (\ut, \vt) \|_{H^1 \times L^2}
  .
  \ee
  Next we prove that
  \eqref{p41_3}is  actually a strong convergence.
  To this end, taking \eqref{p41_5} into account,
  we only need to
  show
  \be
  \label{p41_6}
  \limsup_{n \to \infty}
\| (u(0, -t_n, \omega, u_{0,n} ),
v(0, -t_n, \omega, v_{0,n}) ) \|_{H^1 \times L^2}
  \le \| (\ut, \vt) \|_{H^1 \times L^2}.
  \ee
  We now prove \eqref{p41_6} by the energy equation
  \eqref{ener}.
    It follows from Lemma \ref{lem31} that
  there exists $N_2=N_2(B, \omega)>0$ such that
  for all $n \ge N_2$,
  \be
  \label{p41_10}
  \| u(t, -t_n, \omega, u_{0,n}) \|^2_{\hone}
  +\| v(t , -t_n, \omega, v_{0,n}) \|^2
  \le e^{-\sigma t} R(\omega),
  \ee
  where $ -t_n \le  t \le 0$.
  Given $m>0$, let $N_3 =N_3(m)>0$ be large enough  such that
  $t_n \ge m$ for all $n\ge N_3$.
  Denote by $N_4= \max\{N_2, N_3\}$. Then by
  \eqref{p41_10} we get that,  for all $n \ge N_4$,
  \be
  \label{p41_11_a1}
  \| u(-m, -t_n, \omega, u_{0,n}) \|^2_{\hone}
  +\| v(-m , -t_n, \omega, v_{0,n}) \|^2
  \le e^{ \sigma m} R(\omega).
  \ee
  By a diagonal procedure, we conclude from
  \eqref{p41_11_a1} that there exist
  a sequence $\{\ut_m, \vt_m\}_{m=1}^\infty$ in
  $\hone \times \ltwo$ and
  a subsequence of $\{(t_n, u_{0,n}, v_{0,n})\}_{n=1}^\infty$ (not relabeled) such that
  for every positive integer $m$, when $n \to \infty$,
  \be
  \label{p41_12}
  (u(-m, -t_n, \omega, u_{0,n}),
    v(-m , -t_n, \omega, v_{0,n}) )
    \to
   (\ut_m, \vt_m )
   \quad \mbox{weakly in} \quad \hone \times \ltwo.
  \ee
  Notice that
 \be
 \label{p41_13}
  (u(0, -t_n, \omega, u_{0,n}), v(0, -t_n, \omega, v_{0,n}))
  \ee
  $$
  = (u(0, -m, \omega, u(-m, -t_n, \omega, u_{0,n})),
  v(0, -m, \omega, v(-m, -t_n, \omega, v_{0,n}))),
  $$
  which along \eqref{p41_12} and
   Lemma \ref{weak_cont}
  implies that, for every positive integer $m$, when
  $n \to \infty$,
  \be
  \label{p41_20}
   u(0, -t_n, \omega, u_{0,n})
  \to   u(0, -m, \omega, \ut_m)
  \quad \mbox{weakly in } \quad \hone ,
  \ee
  and
  \be
  \label{p41_21}
   v(0, -t_n, \omega, v_{0,n})
  \to   v(0, -m, \omega, \vt_m)
  \quad \mbox{weakly in } \quad \ltwo.
  \ee
  By \eqref{p41_3}  and
  \eqref{p41_20}-\eqref{p41_21} we find that
  \be
  \label{p41_22}
  \ut = u(0, -m, \omega, \ut_m)
  \quad \mbox{and}
  \quad
 \vt = v(0, -m, \omega, \vt_m).
 \ee
 Applying \eqref{ener} to
 $(u(0, -m, \omega, \ut_m), v(0, -m, \omega, \vt_m))$,
 by \eqref{p41_22} we get
 \be
 \label{euvt}
 E(\ut, \vt)
 =e^{-4 \sigma m} E(\ut_m, \vt_m)
 +\int_{-m}^0 e^{4 \sigma \xi}
 \Psi (u(\xi, -m, \omega, \ut_m), v(\xi, -m, \omega, \vt_m)) d\xi.
 \ee
 Similarly,   applying \eqref{ener} to
 $(u(0, -m, \omega, u(-m, -t_n, \omega, u_{0,n})  ), v(0, -m, \omega,
  v(-m, -t_n, \omega, v_{0,n} ) ))$,
 by \eqref{p41_13} and \eqref{ener2} we  have
$$
 E ( u(0, -t_n, \omega, u_{0,n} ), v(0, -t_n, \omega, v_{0,n} ))
 $$
 $$
 =e^{-4 \sigma m} E ( u(-m, -t_n, \omega, u_{0,n} ), v(-m, -t_n, \omega, v_{0,n} ))
 $$
$$
 +\int_{-m}^0 e^{4 \sigma \xi}
 \Psi (u(\xi, -m, \omega,u(-m, -t_n, \omega, u_{0,n})  ),
 v(\xi, -m, \omega,  v(-m, -t_n, \omega, v_{0,n} ))) d\xi
$$
$$
=e^{-4 \sigma m} E ( u(-m, -t_n, \omega, u_{0,n} ), v(-m, -t_n, \omega, v_{0,n} ))
$$
$$
  -2(\alpha -\delta -2 \sigma)
  \int^0_{-m} e^{4 \sigma \xi}\| v(\xi, -m, \omega, v(-m, -t_n, \omega, v_{0,n})) \|^2 d\xi
  $$
  $$
  -2( \delta -2 \sigma)(\lambda +\delta^2 -\alpha \delta)
  \int^0_{-m} e^{4 \sigma \xi}\| u(\xi, -m, \omega, u(-m, -t_n, \omega, u_{0,n})) \|^2 d\xi
  $$
  $$
  -2( \delta -2 \sigma)
  \int^0_{-m} e^{4 \sigma \xi}\| \nabla u(\xi, -m, \omega, u(-m, -t_n, \omega, u_{0,n})) \|^2 d\xi
  $$
  $$
  + 8 \sigma
  \int^0_{-m} e^{4 \sigma \xi} \int_{\R^3} F(x,
   u(\xi, -m, \omega, u(-m, -t_n, \omega, u_{0,n} )) ) dx  d\xi
   $$
  $$
  -2 \delta
  \int^0_{-m} e^{4 \sigma \xi} \int_{\R^3}
  u(\xi, -m, \omega, u(-m, -t_n, \omega, u_{0,n} ))
  \times f(x,
   u(\xi, -m, \omega, u(-m, -t_n, \omega, u_{0,n} )) )
    dx   d\xi
   $$
   $$
   + 2(\lambda +\delta^2 -\alpha \delta)
   \int^0_{-m} e^{4 \sigma \xi} \int_{\R^3}
   h(x) u(\xi, -m, \omega, u(-m, -t_n, \omega, u_{0,n} ))
   \omega (\xi) dx d\xi
   $$
    $$
   + 2
   \int^0_{-m} e^{4 \sigma \xi} \int_{\R^3}
   \nabla h(x) \cdot \nabla  u(\xi, -m, \omega, u(-m, -t_n, \omega, u_{0,n} ))
   \omega (\xi) dx d\xi
   $$
   $$
   + 2
   \int^0_{-m} e^{4 \sigma \xi} \int_{\R^3}
   h(x) f(x, u(\xi, -m, \omega, u(-m, -t_n, \omega, u_{0,n} )))
   \omega (\xi) dx d\xi
   $$
   $$
   + 2
   \int^0_{-m} e^{4 \sigma \xi} \int_{\R^3}
   g(x)    v(\xi, -m, \omega, v(-m, -t_n, \omega, v_{0,n} ))
     dx d\xi
   $$
  \be
  \label{euv}
   + 2 (\delta -\alpha)
   \int^0_{-m} e^{4 \sigma \xi} \int_{\R^3}
     h(x)   v(\xi, -m, \omega, v(-m, -t_n, \omega, v_{0,n} ))
   \omega (\xi) dx d\xi.
  \ee
  Now, we need to deal with every term on the right-hand side
  of
  \eqref{euv}. For the first term, by \eqref{ener1} we have
  $$
   e^{-4 \sigma m} E ( u(-m, -t_n, \omega, u_{0,n} ), v(-m, -t_n, \omega, v_{0,n} ))
  $$
  $$
  = e^{-4 \sigma m} \left (\| v(-m, -t_n, \omega, v_{0,n} )\|^2
  + (\lambda + \delta^2 -\alpha \delta)
  \| u(-m, -t_n, \omega, u_{0,n} ) \|^2
  \right )
  $$
  $$
  +  e^{-4 \sigma m} \left (
  \|\nabla  u(-m, -t_n, \omega, u_{0,n} ) \|^2
  +
  2\int_{\R^3} F(x,   u(-m, -t_n, \omega, u_{0,n} ) ) dx
  \right  ),
  $$
  which along with
  \eqref{p41_11_a1} shows that for all $n \ge N_4$,
  $$
   e^{-4 \sigma m} E ( u(-m, -t_n, \omega, u_{0,n} ), v(-m, -t_n, \omega, v_{0,n} ))
   $$
  \be
\label{p41_30}
   \le c e^{-3 \sigma m} R(\omega)
   + 2 e^{-4 \sigma m}\int_{\R^3} F(x,   u(-m, -t_n, \omega, u_{0,n} ) ) dx .
  \ee
  Using \eqref{F3} to estimate the last term on the
  right-hand side of the above,  since $\gamma  \le 3$ we get
  for all $n \ge N_4$,
  $$
    \int_{\R^3} F(x,   u(-m, -t_n, \omega, u_{0,n} ) ) dx
   $$
   $$
   \le  c
   \left (
    \|u(-m, -t_n, \omega, u_{0,n} ) \|^2
    + \| u(-m, -t_n, \omega, u_{0,n} )\|^{\gamma +1}_{\gamma +1}
    + 1
   \right )
   $$
   $$
   \le
   c
   \left (
    \|u(-m, -t_n, \omega, u_{0,n} ) \|^2
    + \| u(-m, -t_n, \omega, u_{0,n} )\|^{\gamma +1}_{H^1}
    + 1
   \right ),
   $$
   which along with \eqref{p41_11_a1} implies that for all
   $n \ge N_4$,
   \be
\label{p41_31}
    \int_{\R^3} F(x,   u(-m, -t_n, \omega, u_{0,n} ) ) dx
   \le
   c \left ( e^{\sigma m} R(\omega) + e^{2\sigma m}R^2(\omega) +1
   \right ).
  \ee
By \eqref{p41_30}-\eqref{p41_31} we get that, for all $n \ge N_4$,
 \be
\label{p41_33}
   e^{-4 \sigma m} E ( u(-m, -t_n, \omega, u_{0,n} ), v(-m, -t_n, \omega, v_{0,n} ))
 \le c e^{-2\sigma m}   ( 1+ R^2 (\omega)   ).
\ee
Next, we deal with  the second term on the right-hand side of
\eqref{euv}.  By \eqref{p41_12} and Lemma \ref{weak_cont}  we find that
for every $\xi \in [-m, 0]$, when $n \to \infty$,
$$
v(\xi, -m, \omega, v(-m, -t_n, \omega, v_{0,n}))
\to v(\xi, -m, \omega, \vt_m)
\quad \mbox{in} \quad  \ltwo,
$$
which implies that, for all $\xi \in [-m,0]$,
\be
\label{p41_34}
\liminf_{n \to \infty}
\| v(\xi, -m, \omega, v(-m, -t_n, \omega, v_{0,n})) \|^2
\ge \|v(\xi, -m, \omega, \vt_m) \|^2.
\ee
By \eqref{p41_34} and Fatou's lemma we obtain
$$
\liminf_{n \to \infty}
\int^0_{-m} e^{4 \sigma \xi}
\|   v(\xi, -m, \omega, v(-m, -t_n, \omega, v_{0,n})) \|^2 d\xi
$$
$$
\ge
\int^0_{-m} e^{4 \sigma \xi}
\liminf_{n \to \infty}
\|   v(\xi, -m, \omega, v(-m, -t_n, \omega, v_{0,n})) \|^2 d\xi
$$
$$
\ge
\int^0_{-m} e^{4 \sigma \xi}
\|   v(\xi, -m, \omega,  \vt_m ) \|^2 d\xi.
$$
Therefore,  by \eqref{kappa} we have
$$
\limsup_{n \to \infty} -2(\alpha -\delta -2\sigma)
\int^0_{-m} e^{4 \sigma \xi}
\|   v(\xi, -m, \omega, v(-m, -t_n, \omega, v_{0,n})) \|^2 d\xi
$$
$$
= -2(\alpha -\delta -2\sigma)
\liminf_{n \to \infty}
\int^0_{-m} e^{4 \sigma \xi}
\|   v(\xi, -m, \omega, v(-m, -t_n, \omega, v_{0,n})) \|^2 d\xi
$$
\be
\label{p41_35}
\le
-2(\alpha -\delta -2\sigma)
\int^0_{-m} e^{4 \sigma \xi}
\|   v(\xi, -m, \omega,  \vt_m ) \|^2 d\xi.
\ee
Similarly, by \eqref{delta}, \eqref{kappa},  \eqref{p41_12} and
Fatou's lemma, we can also  prove that
$$
\limsup_{n\to\infty}
  -2( \delta -2 \sigma)(\lambda +\delta^2 -\alpha \delta)
  \int^0_{-m} e^{4 \sigma \xi}\| u(\xi, -m, \omega, u(-m, -t_n, \omega, u_{0,n})) \|^2 d\xi
  $$
\be
\label{p41_36}
\le
  -2( \delta -2 \sigma)(\lambda +\delta^2 -\alpha \delta)
\int^0_{-m} e^{4 \sigma \xi}
\|   u(\xi, -m, \omega,  \ut_m ) \|^2 d\xi,
\ee
and
  $$
\limsup_{n\to\infty}
  -2( \delta -2 \sigma)
  \int^0_{-m} e^{4 \sigma \xi}\| \nabla u(\xi, -m, \omega, u(-m, -t_n, \omega, u_{0,n})) \|^2 d\xi
  $$
\be
\label{p41_37}
\le
   -2( \delta -2 \sigma)
\int^0_{-m} e^{4 \sigma \xi}
\|  \nabla u(\xi, -m, \omega,  \ut_m ) \|^2 d\xi.
\ee
Next, we prove the convergence of the fifth term on the right-hand side of
\eqref{euv} which is a nonlinear term. We claim
$$
\lim_{n \to \infty} \int_{-m}^0 e^{4\sigma \xi}\int_{\R^3} F(x, u(\xi, -m, \omega, u(-m, -t_n, \omega, u_{0,n} ))) dxd\xi
$$
\be
\label{p41_38}
= \int_{-m}^0 e^{4\sigma \xi}\int_{\R^3} F(x, u(\xi, -m, \omega, \ut_m )) dxd\xi.
\ee
To prove \eqref{p41_38} we write
$$
 | \int_{-m}^0 e^{4\sigma \xi}\int_{\R^3} \left ( F(x, u(\xi, -m, \omega, u(-m, -t_n, \omega, u_{0,n} )))
-
   F(x, u(\xi, -m, \omega, \ut_m )) \right ) dxd\xi|
$$
 $$
\le
  \int_{-m}^0 e^{4\sigma \xi}\int_{|x|> k} | F(x, u(\xi, -m, \omega, u(-m, -t_n, \omega, u_{0,n} )))
-
   F(x, u(\xi, -m, \omega, \ut_m )) | dxd\xi
$$
\be
\label{p41_39}
+
  |\int_{-m}^0 e^{4\sigma \xi}\int_{|x| <k}  F(x, u(\xi, -m, \omega, u(-m, -t_n, \omega, u_{0,n} )))
-
   F(x, u(\xi, -m, \omega, \ut_m ))   dxd\xi |.
\ee
Given $\epsilon>0$, by Lemma \ref{lem32} we find that there are
$k_1 =k_1(\omega, \epsilon)>0$ and $N_5 =N_5(B, \omega, \epsilon) \ge N_4$
such that for all $k \ge k_1$ and $n\ge N_5$,
\be
\label{p41_40}
\int_{|x| >k} |u(\xi, -t_n, \omega, u_{0,n})|^2 dx
\le \epsilon e^{-\sigma \xi},
\ee
where $\xi \in [-t_n, 0]$.
Hence,  by \eqref{F3} we obtain
that for all $k \ge k_1$ and $n\ge N_5$,
$$
\int_{|x| >k}  | F(x, u(\xi,   -t_n, \omega, u_{0,n} ))|dx
$$
$$
\le
\int_{|x| >k}  \left ( |u(\xi,   -t_n, \omega, u_{0,n} ) | ^2
+ |u(\xi,   -t_n, \omega, u_{0,n} ) | ^{\gamma +1} + \phi_1^2 + \phi_2
\right ) dx
$$
$$
\le
\int_{|x| >k} (\phi_1^2 + \phi_2) dx
+
\int_{|x| >k}
 |u(\xi,   -t_n, \omega, u_{0,n} ) | ^2 dx
$$
$$
+\left (
\int_{|x| >k} |u(\xi,   -t_n, \omega, u_{0,n} ) |^{2\gamma } dx
\right )^{\frac 12}
\left (
\int_{|x| >k} |u(\xi,   -t_n, \omega, u_{0,n} ) |^{2 } dx
\right )^{\frac 12}
$$
$$
\le
\int_{|x| >k} (\phi_1^2 + \phi_2) dx
+ \epsilon e^{-\sigma \xi}
+ \sqrt{\epsilon} e^{-{\frac \sigma{2}} \xi}
 \left (
\int_{\R^3} |u(\xi,   -t_n, \omega, u_{0,n} ) |^{2\gamma } dx
\right )^{\frac 12}
$$
$$
\le
\int_{|x| >k} (\phi_1^2 + \phi_2) dx
+ \epsilon e^{-\sigma \xi}
+ \sqrt{\epsilon} e^{-{\frac \sigma{2}} \xi}
  \|u(\xi,   -t_n, \omega, u_{0,n} ) \|_{H^1}^\gamma,
$$
which along with the fact  $ \gamma \le 3$ and  \eqref{p41_10}
implies that
\be
\label{p41_40_a1}
\int_{|x| >k}  | F(x, u(\xi,   -t_n, \omega, u_{0,n} ))|dx
\le
\int_{|x| >k} (\phi_1^2 + \phi_2) dx
+ \epsilon e^{-\sigma \xi}
+ c \sqrt{\epsilon} e^{-{\frac \sigma{2}} \xi}
  (1+  e^{- {\frac{3\sigma}2} \xi} R^{\frac 32} (\omega)).
\ee
Notice that there is $k_2 =k_2(\epsilon)>0$ such that
for all $k \ge k_2$,
the first term on the right-hand side of
\eqref{p41_40_a1} is bounded by $\epsilon$.
Therefore, for all $\xi \le 0$,  $n \ge N_5$ and $k \ge k_3 =\max \{k_1, k_2\}$,
\be
\label{p41_40_a2}
\int_{|x| >k}  | F(x, u(\xi,   -t_n, \omega, u_{0,n} ))|dx
\le
\epsilon
+ e^{-2 \sigma \xi}
\left ( \epsilon + \sqrt{\epsilon} c
+\sqrt{\epsilon}\;  c \; R ^{\frac 32} (\omega) \right ).
\ee
On the other hand, there exits
$k_4=k_4(m, \omega, \epsilon) \ge k_3$ such that for all
$k \ge k_4$,
\be
\label{p41_41}
  \int_{-m}^0 e^{4\sigma \xi}\int_{|x|> k} |
   F(x, u(\xi, -m, \omega, \ut_m )) | dxd\xi
\le \epsilon.
\ee
By \eqref{p41_40_a2}-\eqref{p41_41}, the first term on the right-hand
side of \eqref{p41_39} satisfies, for all $n \ge N_5$ and
$k \ge k_4$,
$$
  \int_{-m}^0 e^{4\sigma \xi}\int_{|x|> k} | F(x, u(\xi, -m, \omega, u(-m, -t_n, \omega, u_{0,n} )))
-
   F(x, u(\xi, -m, \omega, \ut_m )) | dxd\xi
$$
$$
=
  \int_{-m}^0 e^{4\sigma \xi}\int_{|x|> k} | F(x, u(\xi,   -t_n, \omega, u_{0,n} ))
-
   F(x, u(\xi, -m, \omega, \ut_m )) | dxd\xi
$$
$$
\le
  \int_{-m}^0 e^{4\sigma \xi}\int_{|x|> k} | F(x, u(\xi,   -t_n, \omega, u_{0,n} ))|dx d\xi
$$
$$
+
\int_{-m}^0 e^{4\sigma \xi}\int_{|x|> k}
   |F(x, u(\xi, -m, \omega, \ut_m )) | dxd\xi
$$
$$
\le \epsilon
 +\epsilon  \int_{-m}^0 e^{4\sigma \xi}
 d\xi
+ (\epsilon + \sqrt{\epsilon} c
+ \sqrt{\epsilon}\; c \;  R^{\frac 32} (\omega) )
\int^0_{-m} e^{2 \sigma \xi} d\xi
$$
\be
\label{p41_42}
\le   \sqrt{\epsilon}\; c (1
+   R^{\frac 32} (\omega)  )
\quad \mbox{for all} \quad \epsilon \le 1.
\ee
To deal with the second term on the right-hand side
of \eqref{p41_39}, we notice that,
  by
\eqref{p41_12} and Lemma \ref{weak_cont},
 when $n \to \infty$,
\be
\label{p41_43}
u(\xi, -m, \omega, u(-m, -t_n, \omega, u_{0,n}))
\to u(\xi, -m, \omega, \ut_m) \quad \mbox{ weakly
in} \    \hone ,
\ee
for  $\xi \in [-m, 0]$.
By  \eqref{p41_43} and the compactness of embedding
$H^1(Q_k) \hookrightarrow
L^2(Q_k)$, we find that, for $\xi \in [-m, 0]$,
\be
\label{p41_44}
u(\xi, -m, \omega, u(-m, -t_n, \omega, u_{0,n}))
\to u(\xi, -m, \omega, \ut_m) \quad \mbox{ strongly
in} \   L^2(Q_k) .
\ee
We also have
$$
 |\int_{|x|<k}  ( F(x, u(\xi, -m, \omega, u(-m, -t_n, \omega, u_{0,n} )) )
-
   F(x, u(\xi, -m, \omega, \ut_m ) ) )   dx |
$$
$$
 = |\int_{|x|<k}  {\frac {\partial F}{\partial u}}(x, \bar{u}) \
 (   u(\xi, -m, \omega, u(-m, -t_n, \omega, u_{0,n} ) )
-  u(\xi, -m, \omega, \ut_m  ) )  dx |
$$
$$
 = |\int_{|x|<k}  f(x, \bar{u}) \
 (   u(\xi, -m, \omega, u(-m, -t_n, \omega, u_{0,n} ) )
-  u(\xi, -m, \omega, \ut_m  ) )  dx |
$$
\be
\label{p41_50}
 \le \left (  \int_{\R^3} | f(x, \bar{u})|^2 dx \right )^{\frac 12}
 \|  u(\xi, -m, \omega, u(-m, -t_n, \omega, u_{0,n} ) )
-  u(\xi, -m, \omega, \ut_m  ) ) \|_{L^2(Q_k)}.
\ee
By \eqref{f1}  and \eqref{p41_10} we get
$$
\left (  \int_{\R^3} | f(x, \bar{u})|^2 dx \right )^{\frac 12}
$$
$$
\le c \left (
  \|u(\xi, -m, \omega, u(-m, -t_n, \omega, u_{0,n} ) )\|_{H^1}^\gamma
+ \|  u(\xi, -m, \omega,\ut_m )\|^\gamma + \|\phi_1 \|^2
\right )
$$
 $$
\le c \left (
  \|u(\xi,   -t_n, \omega, u_{0,n}  )\|_{H^1}^\gamma
+ \|  u(\xi, -m, \omega,\ut_m )\|^\gamma + \|\phi_1 \|^2
\right )
$$
$$
\le c \left ( e^{-{\frac {\sigma  \gamma}{2}} \xi} R^{\frac \gamma2} (\omega)
+ \|  u(\xi, -m, \omega,\ut_m )\|^\gamma + \|\phi_1 \|^2
\right ),
$$
which along with \eqref{p41_44} and \eqref{p41_50} implies that,
as $n \to \infty$,
\be
\label{p41_51}
 \int_{|x|<k}    F(x, u(\xi, -m, \omega, u(-m, -t_n, \omega, u_{0,n} ) )) dx
\to
  \int_{|x|<k}  F(x, u(\xi, -m, \omega, \ut_m )  )  dx  .
\ee
It follows from
\eqref{p41_10}, \eqref{p41_51} and the dominated convergence
theorem that, when $n \to \infty$,
$$
 \int^0_{-m} e^{4\sigma \xi} \int_{|x|<k}    F(x, u(\xi, -m, \omega, u(-m, -t_n, \omega, u_{0,n} ) ) ) dx d\xi
$$
$$
\to
  \int^0_{-m} e^{4\sigma \xi}
\int_{|x|<k}  F(x, u(\xi, -m, \omega, \ut_m )   )  dx d\xi  .
$$
Therefore, there exists $N_6 \ge N_5$  such that for all
$n \ge N_6$,
$$
 | \int^0_{-m} e^{4\sigma \xi} \int_{|x|<k}
  \left ( F(x, u(\xi, -m, \omega, u(-m, -t_n, \omega, u_{0,n} ) ))
-
     F(x, u(\xi, -m, \omega, \ut_m )   ) \right ) dx d\xi |
\le \epsilon  ,
$$
which along with \eqref{p41_39} and \eqref{p41_42}
implies  \eqref{p41_38}.
By an   argument  similar to the proof
of
\eqref{p41_38}, we can also show the convergence of the
sixth term on the right-hand side of \eqref{euv} (details are omitted).
That is, we have that, as $n \to \infty$,
 $$
\int^0_{-m} e^{4 \sigma \xi} \int_{\R^3}
  u(\xi, -m, \omega, u(-m, -t_n, \omega, u_{0,n}  ))
  \times f(x,
   u(\xi, -m, \omega, u(-m, -t_n, \omega, u_{0,n} )))
    dx   d\xi
   $$
  \be
\label{p41_60} \to
\int^0_{-m} e^{4 \sigma \xi} \int_{\R^3}
  u(\xi, -m, \omega, \ut_m  )
  \times f(x,
   u(\xi, -m, \omega, \ut_m ))
    dx   d\xi.
\ee
The convergence of the remaining terms on the right-hand side of
\eqref{euv} is given below, which  can be proved by a similar (actually  simpler) procedure.
$$
   \int^0_{-m} e^{4 \sigma \xi} \int_{\R^3}
   h(x) u(\xi, -m, \omega, u(-m, -t_n, \omega, u_{0,n} ))
   \omega (\xi) dx d\xi
   $$
\be
\label{p41_70}
\to
   \int^0_{-m} e^{4 \sigma \xi} \int_{\R^3}
   h(x) u(\xi, -m, \omega, \ut_m   )
   \omega (\xi) dx d\xi.
  \ee
   $$
   \int^0_{-m} e^{4 \sigma \xi} \int_{\R^3}
   \nabla h(x) \cdot \nabla  u(\xi, -m, \omega, u(-m, -t_n, \omega, u_{0,n} ))
   \omega (\xi) dx d\xi
   $$
  \be
\label{p41_71}
 \to
   \int^0_{-m} e^{4 \sigma \xi} \int_{\R^3}
   \nabla h(x) \cdot \nabla  u(\xi, -m, \omega, \ut_m   )
   \omega (\xi) dx d\xi.
   \ee
   $$
   \int^0_{-m} e^{4 \sigma \xi} \int_{\R^3}
   h(x) f(x, u(\xi, -m, \omega, u(-m, -t_n, \omega, u_{0,n} )))
   \omega (\xi) dx d\xi
   $$
\be
\label{p41_73}
\to
   \int^0_{-m} e^{4 \sigma \xi} \int_{\R^3}
   h(x) f(x, u(\xi, -m, \omega, \ut_m ))
   \omega (\xi) dx d\xi.
  \ee
   $$
   \int^0_{-m} e^{4 \sigma \xi} \int_{\R^3}
   g(x)    v(\xi, -m, \omega, v(-m, -t_n, \omega, v_{0,n} ))
     dx d\xi
   $$
 \be
\label{p41_76}
   \to \int^0_{-m} e^{4 \sigma \xi} \int_{\R^3}
   g(x)    v(\xi, -m, \omega, \vt_m  )
     dx d\xi.
  \ee
$$
   \int^0_{-m} e^{4 \sigma \xi} \int_{\R^3}
     h(x)   v(\xi, -m, \omega, v(-m, -t_n, \omega, v_{0,n} ))
   \omega (\xi) dx d\xi$$
\be
\label{p41_77}
   \to  \int^0_{-m} e^{4 \sigma \xi} \int_{\R^3}
     h(x)   v(\xi, -m, \omega, \vt_m )
   \omega (\xi) dx d\xi.
\ee
Now, taking  the limit of \eqref{euv} as $n \to \infty$,
by \eqref{p41_33}, \eqref{p41_35}-\eqref{p41_38}
and \eqref{p41_60}-\eqref{p41_77} we find that
$$
\limsup_{n \to \infty} E(u(0, -t_n, \omega, u_{0,n}), v(0, -t_n, \omega, v_{0,n}))
$$
$$
\le
c e^{-2 \sigma m} (1 + R ^2(\omega))
  -2(\alpha -\delta -2 \sigma)
  \int^0_{-m} e^{4 \sigma \xi}\| v(\xi, -m, \omega, \vt_m ) \|^2 d\xi
  $$
  $$
  -2( \delta -2 \sigma)(\lambda +\delta^2 -\alpha \delta)
  \int^0_{-m} e^{4 \sigma \xi}\| u(\xi, -m, \omega, \ut_m ) \|^2 d\xi
  $$
  $$
  -2( \delta -2 \sigma)
  \int^0_{-m} e^{4 \sigma \xi}\| \nabla u(\xi, -m, \omega, \ut_m ) \|^2 d\xi
  $$
  $$
  + 8 \sigma
  \int^0_{-m} e^{4 \sigma \xi} \int_{\R^3} F(x,
   u(\xi, -m, \omega, \ut_m ) ) dx  d\xi
   $$
  $$
  -2 \delta
  \int^0_{-m} e^{4 \sigma \xi} \int_{\R^3}
  u(\xi, -m, \omega,\ut_m )
  \times f(x,
   u(\xi, -m, \omega, \ut_m ))
    dx   d\xi
   $$
   $$
   + 2(\lambda +\delta^2 -\alpha \delta)
   \int^0_{-m} e^{4 \sigma \xi} \int_{\R^3}
   h(x) u(\xi, -m, \omega, \ut_m )
   \omega (\xi) dx d\xi
   $$
    $$
   + 2
   \int^0_{-m} e^{4 \sigma \xi} \int_{\R^3}
   \nabla h(x) \cdot \nabla  u(\xi, -m, \omega, \ut_m )
   \omega (\xi) dx d\xi
   $$
   $$
   + 2
   \int^0_{-m} e^{4 \sigma \xi} \int_{\R^3}
   h(x) f(x, u(\xi, -m, \omega, \ut_m ))
   \omega (\xi) dx d\xi
   $$
   $$
   + 2
   \int^0_{-m} e^{4 \sigma \xi} \int_{\R^3}
   g(x)    v(\xi, -m, \omega, \vt_m )
     dx d\xi
   $$
  \be
  \label{p41_80}
   + 2 (\delta -\alpha)
   \int^0_{-m} e^{4 \sigma \xi} \int_{\R^3}
     h(x)   v(\xi, -m, \omega, \vt_m )
   \omega (\xi) dx d\xi.
  \ee
It follows from \eqref{ener2} and \eqref{p41_80} that
$$
\limsup_{n \to \infty} E(u(0, -t_n, \omega, u_{0,n}), v(0, -t_n, \omega, v_{0,n}))
$$
\be
\label{p41_81}
\le
c e^{-2 \sigma m} (1 + R ^2(\omega))
  + \int_{-m}^0 e^{4 \sigma \xi} \Psi(u(\xi, -m, \omega, \ut_m  ),
v(\xi, -m, \omega, \vt_m  )) d \xi.
\ee
By \eqref{euvt} and \eqref{p41_81} we find that
$$
\limsup_{n \to \infty} E(u(0, -t_n, \omega, u_{0,n}), v(0, -t_n, \omega, v_{0,n}))
$$
\be
\label{p41_82}
\le
c e^{-2 \sigma m} (1 + R ^2(\omega))
  -e^{-4 \sigma m} E(\ut_m, \vt_m )
+ E(\ut, \vt).
\ee
For the second term on the right-hand side
of \eqref{p41_82}, by \eqref{ener1} and \eqref{F2} we have
\be
\label{p41_83}
-e^{-4 \sigma m} E(\ut_m, \vt_m )
\le 2 e^{-4 \sigma m} \int_{\R^3} \phi_3 (x) dx.
\ee
It follows from \eqref{p41_82}-\eqref{p41_83} that
 $$
\limsup_{n \to \infty} E(u(0, -t_n, \omega, u_{0,n}), v(0, -t_n, \omega, v_{0,n}))
$$
\be
\label{p41_84}
\le
c e^{-2 \sigma m} (1 + R ^2(\omega))
  +2 e^{-4 \sigma m} \int_{\R^3} \phi_3 (x) dx
+ E(\ut, \vt).
\ee
 Let $m \to \infty$. Then we get that
 \be
\label{p41_90}
\limsup_{n \to \infty} E(u(0, -t_n, \omega, u_{0,n}), v(0, -t_n, \omega, v_{0,n}))
 \le E(\ut, \vt) .
\ee
On the other hand, It follows from \eqref{p41_40_a2} and
\eqref{p41_51} with $\xi =0$ that, as $n \to \infty$,
$$
\int_{\R^3} F(x, u(0, -t_n, \omega, u_{0,n})) dx
\to \int_{\R^3} F(x, \ut) dx,
$$
which
   along with \eqref{ener1} shows that
$$
\limsup_{n \to \infty}
E(u(0, -t_n, \omega, u_{0,n}), v(0, -t_n, \omega, v_{0,n}))
= 2 \int_{\R^3} F(x, \ut) dx
$$
$$
+ \limsup_{n\to \infty}
\left (
\| v(0, -t_n, \omega, v_{0,n})\|^2
+(\lambda + \delta^2 -\alpha \delta) \| u(0, -t_n, \omega, u_{0,n}) \|^2
+\| \nabla u(0, -t_n, \omega, u_{0,n}) \|^2
\right ).
$$
Substituting the above equality into
\eqref{p41_90}, by \eqref{ener1} we obtain that
$$
\limsup_{n\to \infty}
\left (
\| v(0, -t_n, \omega, v_{0,n})\|^2
+(\lambda + \delta^2 -\alpha \delta) \| u(0, -t_n, \omega, u_{0,n}) \|^2
+\| \nabla u(0, -t_n, \omega, u_{0,n}) \|^2
\right )
$$
\be
\label{p41_92}
\le
 \| \vt \|^2
+ (\lambda + \delta^2 -\alpha \delta) \| \ut\|^2
+ \| \nabla \ut \|^2.
\ee
Notice that the left   and right expressions
are equivalent norms of $\hone \times \ltwo$.
Therefore, by \eqref{delta} and \eqref{p41_92} we find that
$$ \limsup_{n\to \infty}
\left (
  \| u(0, -t_n, \omega, u_{0,n}) \|^2_{H^1}
+\| v(0, -t_n, \omega,v_{0,n}) \|^2
\right )
\le
\| \ut \|^2_{H^1} + \| \vt \|^2,
$$
which implies \eqref{p41_6}. Finally, we get the following
strong convergence by \eqref{p41_3}-\eqref{p41_6}:
$$
  (u(0, -t_n, \omega, u_{0,n}), v(0, -t_n, \omega, v_{0,n}) )
  \to  (\ut, \vt) \  \mbox{strongly  in} \ \hone \times \ltwo.
 $$
This completes the proof.
 \end{proof}

As an immediate consequence of Lemma \ref{lem41},
 we see that
the random dynamical system $\Phi$ is pullback
asymptotically compact
in $\hone \times \ltwo$.

\begin{lem}
\label{lem42}
Assume that $g \in L^2(\mathbb{R}^3)$, $ h  \in  H^1(\mathbb{R}^3)$ and
 \eqref{f1}-\eqref{f3} hold.
 Then the random dynamical system $\Phi$ is
 $\mathcal{D}$-pullback  asymptotically compact
 in $ H^1(\mathbb{R}^3)
\times L^2(\mathbb{R}^3)$; that is,
 for $P$-a.e. $\omega \in \Omega$, the sequence
 $\{\Phi (t_n, \theta_{-t_n} \omega,  (u_{0,n}, z_{0,n} ))\}$ has a
 convergent  subsequence in $ H^1(\mathbb{R}^3)
\times L^2(\mathbb{R}^3)$ provided
 $t_n \to \infty$ and $(u_{0,n}, z_{0,n}) \in B(\theta_{-t_n} \omega )$ with
$B =\{B(\omega)\}_{\omega \in \Omega } \in \mathcal{D}$.
 \end{lem}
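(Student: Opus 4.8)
The plan is to obtain this lemma as an immediate corollary of Lemma~\ref{lem41}, exploiting the fact that $\Phi$ and the auxiliary solution $(u,v)$ of \eqref{pde1}-\eqref{pde3} produce the same point at the evaluation time, since they differ only by the term $h\omega(t)$, which vanishes at $t=0$ because $\omega(0)=0$. First I would rewrite the sequence in question. By the identity \eqref{shift},
\be
\Phi(t_n, \theta_{-t_n}\omega, (u_{0,n}, z_{0,n}))
= (u(0,-t_n,\omega,u_{0,n}),\, z(0,-t_n,\omega,z_{0,n})).
\ee
Applying the relation $z(t,\tau,\omega)=v(t,\tau,\omega)+h\omega(t)$ at $t=0$ and using $\omega(0)=0$, the second component equals $v(0,-t_n,\omega,v_{0,n})$, where the transformed datum is $v_{0,n}=z_{0,n}-h\omega(-t_n)$. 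Hence
\be
\Phi(t_n, \theta_{-t_n}\omega, (u_{0,n}, z_{0,n}))
= (u(0,-t_n,\omega,u_{0,n}),\, v(0,-t_n,\omega,v_{0,n})),
\ee
which is exactly the sequence treated in Lemma~\ref{lem41}.

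It then remains to check that the transformed initial data $(u_{0,n},v_{0,n})$ satisfy the hypothesis of Lemma~\ref{lem41}, namely that they lie in a tempered family evaluated at $\theta_{-t_n}\omega$. Since $(u_{0,n},z_{0,n})\in B(\theta_{-t_n}\omega)$ with $B\in\mathcal{D}$, we have $\|u_{0,n}\|_{H^1}\le d(B(\theta_{-t_n}\omega))$ and $\|v_{0,n}\|\le d(B(\theta_{-t_n}\omega))+\|h\|\,|\omega(-t_n)|$. Setting $\hat r(t)=d(B(\theta_{-t}\omega))+\|h\|\,|\omega(-t)|$, one has, for every $\beta>0$,
\be
e^{-\beta t}\hat r(t)
\le e^{-\beta t} d(B(\theta_{-t}\omega)) + \|h\|\, e^{-\beta t}|\omega(-t)|
\longrightarrow 0 \quad\text{as } t\to\infty,
\ee
the first term vanishing because $B$ is tempered and the second because the Wiener path has at most linear growth as $|t|\to\infty$ (the same property used for the temperedness of $R(\omega)$ in Lemma~\ref{lem31}). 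Thus $(u_{0,n},v_{0,n})$ belongs to a tempered random set $\hat B=\{\hat B(\omega)\}\in\mathcal{D}$ (the ball of radius $\hat r$ along the orbit of $\omega$), so all hypotheses of Lemma~\ref{lem41} hold.

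Finally, Lemma~\ref{lem41} furnishes a subsequence of $\{(u(0,-t_n,\omega,u_{0,n}),v(0,-t_n,\omega,v_{0,n}))\}$ that converges in $\hone\times\ltwo$; by the identification of the first paragraph this is a convergent subsequence of $\{\Phi(t_n,\theta_{-t_n}\omega,(u_{0,n},z_{0,n}))\}$, which proves the $\mathcal{D}$-pullback asymptotic compactness of $\Phi$. Every step other than the temperedness verification is a mechanical substitution, so I expect the only genuine point to be the second step: confirming that passing from the $z$-variable to the $v$-variable through the shift $h\omega(-t_n)$ keeps the initial data inside the class $\mathcal{D}$. This is precisely where the sub-exponential (at most linear) growth of the Wiener process is essential, mirroring its role in the temperedness argument of Lemma~\ref{lem31}.
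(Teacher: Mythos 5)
Your proposal is correct and takes essentially the same route the paper intends: the paper offers no separate proof of Lemma \ref{lem42}, presenting it as an immediate consequence of Lemma \ref{lem41} via the identity \eqref{shift} together with $z(0,-t_n,\omega,z_{0,n})=v(0,-t_n,\omega,v_{0,n})+h\omega(0)=v(0,-t_n,\omega,v_{0,n})$. Your additional verification that the shifted data $v_{0,n}=z_{0,n}-h\omega(-t_n)$ still lie in a tempered family (using the at most linear growth of the Wiener path) is precisely the detail the paper leaves implicit, and you handle it correctly.
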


We are now in a position to prove existence of a random
attractor for the stochastic wave equation.

\begin{thm}
Assume that $g \in L^2(\mathbb{R}^3)$, $ h  \in  H^1(\mathbb{R}^3)$ and
 \eqref{f1}-\eqref{f3} hold.
   Then the random dynamical
 system $\Phi$ has a unique $\mathcal{D}$-random
 attractor  $\{\mathcal{A}(\omega)\}_{\omega \in \Omega}$
 in $H^1 (\mathbb{R}^3) \times L^2(\mathbb{R}^3)$.
 \end{thm}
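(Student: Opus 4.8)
The plan is to invoke the abstract existence result, Proposition \ref{att}, whose hypotheses have essentially all been verified in the preceding development. That proposition requires four ingredients: that $\mathcal{D}$ be inclusion-closed, that $\Phi$ be a continuous RDS, that $\Phi$ possess a closed random absorbing set in $\mathcal{D}$, and that $\Phi$ be $\mathcal{D}$-pullback asymptotically compact.

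First I would check that $\mathcal{D}$, the collection of all tempered random subsets of $\hone \times \ltwo$, is inclusion-closed. This is immediate from the definition of temperedness: if $D=\{D(\omega)\}_{\omega\in\Omega}$ is tempered and $\tilde{D}(\omega)\subseteq D(\omega)$ for every $\omega$, then $d(\tilde{D}(\theta_{-t}\omega))\le d(D(\theta_{-t}\omega))$, whence $\lim_{t\to\infty} e^{-\beta t} d(\tilde{D}(\theta_{-t}\omega))=0$ for all $\beta>0$, so $\tilde{D}\in\mathcal{D}$.

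Next I would recall that $\Phi$ was shown in Section 3 to be a continuous RDS over $(\Omega,\mathcal{F},P,(\theta_t)_{t\in\R})$, and that the set $\tilde{B}=\{\tilde{B}(\omega)\}_{\omega\in\Omega}$ defined in \eqref{absorb} is a closed random absorbing set for $\Phi$ in $\mathcal{D}$. Indeed, \eqref{sec5_2}, which rests on the uniform estimate of Lemma \ref{lem31}, shows that $\tilde{B}$ absorbs every tempered set; moreover the radius $R(\omega)$ is a tempered random function, as established at the end of Lemma \ref{lem31}, so that $\tilde{B}\in\mathcal{D}$ by the inclusion-closedness just noted. Finally, Lemma \ref{lem42} supplies the $\mathcal{D}$-pullback asymptotic compactness of $\Phi$.

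With these four hypotheses in hand, Proposition \ref{att} applies verbatim and yields a unique $\mathcal{D}$-random attractor $\{\mathcal{A}(\omega)\}_{\omega\in\Omega}$, given explicitly by $\mathcal{A}(\omega)=\bigcap_{\tau\ge0}\overline{\bigcup_{t\ge\tau}\Phi(t,\theta_{-t}\omega,\tilde{B}(\theta_{-t}\omega))}$. I do not expect any genuine obstacle at this stage: the entire difficulty of the argument was already resolved in deriving the tail estimates of Lemma \ref{lem32} and combining them with the energy equation \eqref{ener} to establish the strong convergence in Lemma \ref{lem41}, hence the asymptotic compactness in Lemma \ref{lem42}. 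The only points requiring even momentary attention are the inclusion-closedness of $\mathcal{D}$ and the membership $\tilde{B}\in\mathcal{D}$, both of which reduce to the temperedness of $R(\omega)$.
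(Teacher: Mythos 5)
Your proposal is correct and follows essentially the same route as the paper: the paper's proof likewise just cites the closed absorbing set $\tilde{B}$ from \eqref{sec5_2}--\eqref{absorb} and the $\mathcal{D}$-pullback asymptotic compactness from Lemma \ref{lem42}, and then applies Proposition \ref{att}. Your additional remarks on the inclusion-closedness of $\mathcal{D}$ and the temperedness of $R(\omega)$ (hence $\tilde{B}\in\mathcal{D}$) are correct details that the paper leaves implicit.
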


\begin{proof}
Notice that $\Phi$ has a closed  absorbing set
$ \tilde{B} = \{\tilde{B}(\omega)\}_{\omega \in \Omega}$ in $\mathcal{D}$ by \eqref{sec5_2}-\eqref{absorb}, and is $\mathcal{D}$-pullback asymptotically compact
in $\hone \times \ltwo$ by Lemma \ref{lem42}. Hence the existence of a unique
$\mathcal{D}$-random attractor  follows from Proposition
\ref{att}
 immediately.
\end{proof}

\end{document}